\newtheorem{thm}{\bf{Theorem}}[section]
\newtheorem{lem}[thm]{\bf{Lemma}}
\newtheorem{df}[thm]{\bf{Definition}}
\newtheorem{cor}[thm]{\bf{Corollary}}
\newtheorem{prop}[thm]{\bf{Proposition}}
\newtheorem{ex}[thm]{\bf{Example}}
\DeclareRobustCommand
\newcommand{\hilight}[1]{\colorbox{yellow}{#1}}
\newcommand{\dom}{\operatorname{dom}}
\newcommand{\1}{\operatorname{\mathds{1}}}
\newcommand{\Span}{\operatorname{span}}
\newcommand{\Id}{\operatorname{Id}}
\newcommand{\rank}{\operatorname{rank}}
\newcommand{\Proj}{\operatorname{Proj}}
\newcommand{\R}{\operatorname{\mathbb{R}}}
\newcommand{\N}{\operatorname{\mathbb{N}}}
\newcommand{\X}{\operatorname{\mathcal{X}}}
\newcommand{\xm}{\X^{-}}
\newcommand{\y}{\operatorname{\mathcal{Y}}}
\newcommand{\U}{\operatorname{\mathcal{U}}}
\newcommand{\A}{\operatorname{\mathcal{A}}}
\newcommand{\jf}{J_f}
\newcommand{\bbm}{\begin{bmatrix}}
\newcommand{\ebm}{\end{bmatrix}}
\newcommand{\sumdk}{\sum_{i=1}^{k}\left(\prod_{j \neq i}f_j (x^0)\right )\delta^s_{f_i}}
\newcommand{\dfftothei}{\delta^s_{f|f^i}}
\newcommand{\dfminusonefminusi}{\delta^s_{f^{-1}|f^{-i}}}
\newcommand{\doneoverff}{\delta^s_{\frac{1}{f}|f}}
\newcommand{\dfovergg}{\delta^s_{\frac{f}{g}|g}}
\newcommand{\ds}{\delta^s}
\newcommand{\dc}{\delta^c}
\newcommand{\dsf}{\delta^s_f}
\newcommand{\dcf}{\delta^c_f}
\newcommand{\dcg}{\delta^c_g}
\newcommand{\dfxg}{\delta_{f | g}^s}
\newcommand{\nc}{\nabla^c}
\newcommand{\ns}{\nabla^s}
\newcommand{\esfg}{E^s_{fg}}
\newcommand{\esfonefk}{E_{f_1\Compactcdots f_k}^s}
\newcommand{\dsfonefk}{\delta_{f_1\Compactcdots f_k}^s}
\newcommand{\dcfonefk}{\delta_{f_1\Compactcdots f_k}^c}
\newcommand{\esfk}{E_{f^k}^s}
\newcommand{\ecfk}{E_{f^k}^c}
\newcommand{\ecfg}{E_{fg}^c}
\newcommand{\ecfonefk}{E_{f_1\Compactcdots f_k}^c}
\newcommand{\esfoverg}{E_{\frac{f}{g}}^s}
\newcommand{\ecfoverg}{E_{\frac{f}{g}}^c}
\newcommand{\esfminusk}{E_{f^{-k}}^s}
\newcommand{\ecfminusk}{E_{f^{-k}}^c}
\newcommand{\sg}{S_g}
\newcommand{\dsfcircg}{\delta^s_{f \circ g}}
\newcommand{\dcfcircg}{\delta^c_{f \circ g}}
\newcommand{\jsg}{J^s_g}
\newcommand{\jcg}{J^c_g}
\newcommand{\ecfcircg}{E_{f\circ g}^c}
\newcommand{\ncc}{\nabla^{cc}}
\newcommand{\sxtd}{\left ( S^\top \right )^{\dagger}}
\newcommand{\cjt}{\left (J_g^c(\X) \right )^\top}
\newcommand{\cj}{J_g^c(\X)}
\newcommand{\jac}{J_g(x^0)}
\newcommand{\cgfy}{\nc f\left ( g(\X)\right )}
\newcommand{\uu}{\mathbb{U}}
\newcommand{\vv}{\mathbb{V}}
\newcommand{\aff}{\operatorname{aff}}
\newcommand{\gradfvg}{\nabla f_{\vv}\left (g\left (x^0 \right ) \right )}
\newcommand{\jacw}{J_{g_\uu}(x^0)}
\newcommand{\jtw}{\left (J_{g_\uu}(x^0) \right )^\top}
\begin{document}
\title{Error bounds for overdetermined and underdetermined generalized centred simplex gradients}
\author{Warren Hare\thanks{Department of Mathematics, University of British Columbia, Okanagan Campus, Kelowna, B.C. V1V 1V7, Canada. Research partially supported by NSERC of Canada Discovery Grant 2018-03865. warren.hare@ubc.ca, ORCID 0000-0002-4240-3903}\and Gabriel Jarry--Bolduc\thanks{Department of Mathematics, University of British Columbia, Okanagan Campus, Kelowna, B.C. V1V 1V7, Canada. Research partially supported by Natural Sciences and Engineering Research Council (NSERC) of Canada Discovery Grant 2018-03865. gabjarry@alumni.ubc.ca }\and Chayne Planiden\thanks{School of Mathematics and Applied Statistics, University of Wollongong, Wollongong, NSW, 2500, Australia. Research supported by University of Wollongong. chayne@uow.edu.au, ORCID 0000-0002-0412-8445}}
\maketitle\author
\begin{abstract}
 Using the Moore--Penrose pseudoinverse, this work generalizes the gradient approximation technique called centred simplex gradient to allow sample sets containing any number of points. This approximation technique  is called the \emph{generalized centred simplex gradient}.  We develop error bounds and, under a full-rank condition, show that the error bounds have order $O(\Delta^2)$, where $\Delta$ is the radius of the  sample set of points used.  We establish calculus rules for generalized centred simplex gradients, introduce a calculus-based generalized centred simplex gradient and confirm that error bounds for this new approach are also order $O(\Delta^2)$.  We provide several examples to illustrate the results and some benefits of these new methods.
\end{abstract}


\section{Introduction}\label{sec:intro}
Derivative-free optimization (DFO) focuses on the study of optimization algorithms that do not use first-order information within the algorithm. Recent advances in their applications, convergence analysis and practical implementations have fuelled a surge in DFO research (see \cite{Audet2014,audet2017derivative,Conn2009,Custodio2017,HareNutiniTesfamariam2013,larson_menickelly_wild_2019} and citations therein). 

One of the broad classes of DFO algorithms is model-based DFO methods.  These methods rely on accurately approximating first-order information using only function evaluations and then using the approximations within classical optimization algorithms.  For example, using linear interpolation on function values from $n+1$ well-poised sample points in $\R^n$ creates a linear model of the objective function.  The gradient of this linear model, called the {\em simplex gradient}, provides an approximation of the true gradient \cite{Bortz1998,Kelley1999}.

The error bound comparing the simplex gradient and the true gradient dates back to the late 1990s and is known to be order $O(\Delta)$, where $\Delta$ is the radius of the  sample set of evaluated points \cite{Kelley1999}.   This error bound is critical in showing convergence of many first-order model-based methods
\cite[Ch.\ 10 \& 11]{audet2017derivative}.

Simplex gradients, and their associated error bound, are not limited to the setting where exactly $n+1$ interpolation points are used in $\R^n$. In \cite{Conn2008}, the authors study the construction of simplex gradients consisting of $n+1$ interpolation points in $\R^n$, and in \cite{Conn22008}, they extend those results to the cases of fewer (underdetermined models) and more (overdetermined models) than $n+1$ points.  Most notably, they establish error bounds for these cases and find them to be order $O(\Delta)$.  These results were further elaborated in \cite{Regis2015}.

Many other methods of approximating gradients exist \cite{Billups2013,Oeuvray2007,Powell2004,Regis2005,Schonlau1998,Wild2011}. Central to this work is the {\em centred simplex gradient}, which is created by retaining the original points in the sample set and adding their reflection through the reference point (see Definition \ref{def:GCSG}).  This creates an average of two simplex gradients.  Interestingly, the accuracy of the centred simplex gradient is $O(\Delta^2)$ \cite{Kelley1999}.  However, this error bound is only established for the determined case, using exactly $2n$ function evaluations in $\R^n$.  The primary goal of this paper is to establish the error bound for the centred simplex gradient for the underdetermined and overdetermined cases.  This is accomplished using the Moore--Penrose pseudoinverse to define a \emph{generalized centred simplex gradient (GCSG)}, which allows centred simplex gradients to be constructed using a sample set of any finite size.

Returning briefly to our discussion of the simplex gradient, \cite{hare2020,Regis2015} develop calculus rules for the generalized simplex gradient. A secondary goal of this paper is the extension of these results to the GCSG.  This provides the concept of the \emph{generalized centred simplex calculus gradient (GCSCG)}.  We examine this novel gradient approximation and prove that it retains the $O(\Delta^2)$ accuracy of the centred simplex gradient.  Some benefits of the new techniques are illustrated through examples.

The structure of the paper is the following. In Section \ref{sec:prel}, we introduce notation and basic definitions. In Section \ref{sec:errorbounds}, we show that generalized centred simplex gradients inherit the  order of accuracy $O(\Delta^2)$.  We present two error bounds, depending on the number of points  in the sample set. In Section \ref{sec:calc}, we present the calculus rules for the GCSG. In Section \ref{sec:centredsimplexcalculusgradient}, we define the GCSCG, based on the calculus rules from  the previous section. We prove that each of these new techniques has order of accuracy $O(\Delta^2)$.  We provide examples, showing some benefits of the GCSCG compared to the GCSG in certain situations. Section \ref{sec:conclusion} summarizes the work accomplished and suggests some topics to explore in future research.

\section{Preliminaries} \label{sec:prel}

Unless otherwise stated, we use the standard notation found in \cite{rockwets}. The domain of a function $f$ is denoted by $\dom(f)$. The transpose of a matrix $A$ is denoted by $A^\top$. We work in finite-dimensional space $\R^n$ with inner product $x^\top y=\sum_{i=1}^nx_iy_i$ and induced norm $\|x\|=\sqrt{x^\top x}$. We use angle brackets $\langle\cdots\rangle$ to contain an ordered set of vectors. The identity matrix is denoted by $\Id$. We denote by $B(x,\Delta)$ the open ball centred about $x$ with radius $\Delta$. The set of all linear combinations of the vectors in a set $S$ is denoted by $\Span S$. \medskip\\
We next list some definitions and background results.
\begin{df}[Jacobian]
Given a differentiable function $f: \dom(f) \subseteq \R^n\to\R^p$, the \emph{Jacobian} of $f$,
written $\jf$, is the column matrix of all partial derivatives of $f$:
$$\jf=\left[\begin{array}{c c c}\frac{\partial
f}{\partial x_1}&\cdots&\frac{\partial f}{\partial
x_n}\end{array}\right]=\left[\begin{array}{c c c}
\frac{\partial f_1}{\partial x_1}&\cdots&\frac{\partial f_1}{\partial x_n}\\
\vdots&\ddots&\vdots\\
\frac{\partial f_p}{\partial x_1}&\cdots&\frac{\partial f_p}{\partial
x_n}\end{array}\right].$$
\end{df}
\begin{df}[Lipschitz continuity] A function $f: \dom(f) \subseteq \R^n\to\R$ is said to be \emph{Lipschitz continuous} with Lipschitz constant $L\geq0$ if for all $x,y\in\dom (f)$,
$$|f(y)-f(x)|\leq L\|y-x\|.$$If for every $x\in\dom (f) $ there exists a neighbourhood $U$ of $x$ such that $f$ restricted to $U$ is Lipschitz continuous, then $f$ is said to be \emph{locally Lipschitz continuous} on $U$.
\end{df}
We remind the reader that for nonsquare matrices, a generalization of the matrix inverse is the pseudoinverse. The most well-known type of matrix pseudoinverse, which is central to the results of this work, is the Moore--Penrose pseudoinverse.
\begin{df}[Moore--Penrose pseudoinverse] \label{def:mpinverse}
Let $A\in\R^{n\times m}$. The \emph{Moore--Penrose pseudoinverse} of $A$, denoted by $A^\dagger$, is the unique matrix in $\R^{m \times n}$ that satisfies the following four equations:
\begin{align*}
AA^\dagger A&=A,\\A^\dagger AA^\dagger&=A^\dagger,\\(AA^\dagger)^\top&=AA^\dagger,\\(A^\dagger A)^\top&=A^\dagger A.
\end{align*}
\end{df}
The Moore--Penrose inverse $A^\dagger$ is not always an inverse of $A$, but the following two properties hold.
\begin{itemize}
\item If $A$ has full column rank $m$, then $A^\dagger$ is a left-inverse of $A$, that is, $A^\dagger A=\Id_m$.
\item If $A$ has full row rank $n$, then $A^\dagger$ is a right-inverse of $A$, that is, $AA^\dagger=\Id_n$.
\end{itemize}
In order to define the generalized simplex gradient and the GCSG in the sequel, we use the following sets, matrices and terminology.
\begin{df}[Simplex notation]\label{df:simplex}Given $f:\dom(f)  \subseteq \R^n\to\R$ and an ordered sample set of distinct  points
\begin{align*}
\X&=\langle x^0,x^1,\ldots,x^m\rangle=\langle x^0,x^0+d^1,\ldots,x^0+d^m\rangle\subseteq \dom (f),
\intertext{we define}
\X^-&=\langle x^0,x^0-d^1,\ldots,x^0-d^m\rangle\subseteq \dom (f),\\
\Delta&=\max\limits_{i \,\in\{1,\ldots,m\}}\|d^i\| \mbox{\emph{, the radius of }}\X,\\
S=S(\X)&=[d^1~~\cdots~~d^m]\in\R^{n\times m},\\
\ds=\dsf(\X)&=\left[\begin{array}{c}f(x^1)-f(x^0)\\f(x^2)-f(x^0)\\\vdots\\f(x^m)-f(x^0)\end{array}\right]\in\R^m,\\
\dc=\dcf(\X)&=\frac{1}{2}\left[\begin{array}{c}f(x^0+d^1)-f(x^0-d^1)\\f(x^0+d^2)-f(x^0-d^2)\\\vdots\\f(x^0+d^m)-f(x^0-d^m)\end{array}\right]\in\R^m.
\end{align*}
\end{df}
\begin{df}
Let $\X=\langle x^0, x^0+d^1, \dots, x^0+d^m \rangle \subseteq \dom(f)$  be an ordered set of $m+1$ distinct points in $\R^n.$ Then we classify the set $\X$ in exactly one of the following cases:
\begin{itemize}
\item \emph{overdetermined case} if $m>n$ and $\rank S=n$;
\item\emph{determined case} if $m=n$ and $\rank S=n$;
\item\emph{underdetermined case} if $m<n$ and $\rank S=m$;
\item\emph{undetermined case} if S is not full rank.
\end{itemize}
\end{df}

Note that the points in $\X$ are assumed to be distinct for the rest of this paper. 
\begin{df}[Generalized simplex gradient] Let $f:\dom(f) \subseteq \R^n\to\R$ and let $\X$ be an ordered set in $\dom(f).$ The \emph{generalized simplex gradient} (GSG) of $f$ over $\X$ is denoted by $\nabla^sf(\X)$ and defined by
\begin{equation*}\label{eq:simplex}\ns f(\X)=(S^\top)^\dagger\ds.\end{equation*}
\end{df}
\begin{df}[Generalized centred simplex gradient]\label{def:GCSG}
Let $f:\dom(f) \subseteq \R^n\to\R$ and let $\X$ be an ordered set such that $\X \cup \X^{-}$ is in  $\dom(f)$. The \emph{generalized centred simplex gradient} (GCSG) of $f$ over $\X$ is denoted by $\nabla^cf(\X)$ and defined by
\begin{equation*}\label{eq:centred}\nc f(\X)=(S^\top)^\dagger\dc.\end{equation*}
\end{df}
It is easy to show that an equivalent way to compute the GCSG is by using the average of two GSGs:
\begin{equation}\label{eq:altcentred}\nc f(\X)=\frac{1}{2}(\ns f(\X)+\ns f(\X^-)).\end{equation}
Next, we introduce a proposition that clarifies the relation between the GSG and the GCSG and provides a case where both approaches are equal. We require the following two lemmas first.
\begin{lem}
Let $A \in \R^{n \times m}$. Then $(A^\top)^\dagger=(A^\dagger)^\top.$
\end{lem}
\begin{proof}
Since $(A^\dagger)^\top$ and $(A^\top)^\dagger$ satisfy the same four properties in Definition \ref{def:mpinverse}, it follows that they are equal.
\end{proof}
\begin{lem} Let  $A \in \R^{n \times m}$ have full  row rank. Then $\bbm A&-A\ebm^\dagger=\frac{1}{2}\bbm A^\dagger\\-A^\dagger\ebm.$
\end{lem}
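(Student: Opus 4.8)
The plan is to verify directly that the candidate matrix $B=\halb\bbm A^\dagger\\-A^\dagger\ebm$ satisfies the four defining equations of Definition \ref{def:mpinverse} relative to $M=\bbm A&-A\ebm$; by the uniqueness asserted in that definition, this forces $M^\dagger=B$. Thus the entire argument reduces to four block-matrix checks, and the guiding observation is that the full-row-rank hypothesis on $A$ is exactly what is needed to collapse the products $MB$ and $BM$.

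First I would compute the two relevant products. For $MB$, the two blocks combine to give
\[
MB=\bbm A&-A\ebm\,\halb\bbm A^\dagger\\-A^\dagger\ebm=\halb\left(AA^\dagger+AA^\dagger\right)=AA^\dagger.
\]
Since $A$ has full row rank, the property recorded just after Definition \ref{def:mpinverse} gives $AA^\dagger=\Id_n$. Hence $MB=\Id_n$, which is symmetric and satisfies $MBM=M$; this disposes of the first and third Moore--Penrose equations at once. For $BM$ I would write it as the $2\times2$ block matrix $\halb\bbm P&-P\\-P&P\ebm$ with $P:=A^\dagger A$, and then note that $P$ is symmetric by the fourth Moore--Penrose property applied to $A$. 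Consequently $BM$ is symmetric, which settles the fourth equation.

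Finally, the second equation $BMB=B$ is the only one that requires a genuine cancellation. Multiplying out the blocks, each entry of $BMB$ reduces to $PA^\dagger=A^\dagger AA^\dagger$, which equals $A^\dagger$ by the second Moore--Penrose property of $A$; hence $BMB=B$. I do not anticipate a serious obstacle: the computations are routine once the block structure is in place. The one point that must not be overlooked is that the full-row-rank assumption is indispensable — it is precisely the identity $AA^\dagger=\Id_n$ that collapses $MB$ to $\Id_n$ and makes the first and third conditions immediate, and without it the stated closed form for $M^\dagger$ would fail.
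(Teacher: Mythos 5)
Your proof is correct, but it takes a genuinely different route from the paper. The paper invokes the closed-form expression $M^\dagger=M^\top(MM^\top)^{-1}$ valid for a full-row-rank matrix $M=\bbm A&-A\ebm$, computes $MM^\top=2AA^\top$, and simplifies; your argument instead verifies the four Penrose equations for the candidate $B=\tfrac{1}{2}\bbm A^\dagger\\-A^\dagger\ebm$ and appeals to the uniqueness clause of Definition \ref{def:mpinverse}. All four of your block computations check out ($MB=AA^\dagger=\Id_n$, $BM=\tfrac12\bbm P&-P\\-P&P\ebm$ with $P=A^\dagger A$ symmetric, and $BMB=B$ via $A^\dagger AA^\dagger=A^\dagger$), so the argument is complete. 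What your approach buys is self-containedness: it uses only the definition already stated in the paper, whereas the paper's one-line formula for the pseudoinverse of a full-row-rank matrix is a standard fact it never records. One remark in your closing paragraph is, however, false: the full-row-rank hypothesis is \emph{not} indispensable for the identity. Without it, $MB=AA^\dagger$ is still symmetric by the third Penrose property of $A$, and $MBM=\bbm AA^\dagger A&-AA^\dagger A\ebm=M$ by the first, so all four equations survive and the formula $\bbm A&-A\ebm^\dagger=\tfrac12\bbm A^\dagger\\-A^\dagger\ebm$ holds for arbitrary $A$. Your proof quietly demonstrates this greater generality even as your commentary denies it; the error is confined to the commentary and does not affect the validity of the proof of the lemma as stated.
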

\begin{proof}
Since $\bbm A &-A\ebm$ has full row rank, we have
\begin{align*}
    \bbm A&-A\ebm^\dagger&=\bbm A&-A \ebm^\top \left ( \bbm A&-A\ebm \bbm A&-A \ebm^\top \right )^{-1}\\
    &=\bbm A^\top\\-A^\top \ebm \left ( \bbm A&-A \ebm \bbm A^\top\\-A^\top \ebm \right )^{-1}\\
    &=\bbm A^\top\\-A^\top \ebm \left ( 2 \bbm AA^\top \ebm \right )^{-1}\\
    &=\frac{1}{2}\bbm A^\top (AA^\top)^{-1}\\-A^\top (AA^\top)^{-1}   \ebm\\
    &=\frac{1}{2}\bbm A^\dagger\\-A^\dagger \ebm.\qedhere
\end{align*}
\end{proof}
\begin{prop}
Let $f:\dom(f) \subseteq \R^n \to \R$ and let $\X=\langle x^0, x^0+d^1, \dots, x^0+d^m \rangle$ be an ordered set such that $\X \cup \X^{-} \subseteq \dom(f)$ and $S$ has full row rank (determined and overdetermined cases). Let $\mathcal{Y}=\left \langle x^0, x^0+d^1, \dots, x^0+d^m, x^0-d^1, \dots, x^0-d^m \right \rangle$. Then
\begin{align*}
    \ns f(\y)&=\nc f(\X).
\end{align*}
\end{prop}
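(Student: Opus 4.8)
The plan is to express $\ns f(\y)$ directly in terms of the matrix and difference vector attached to $\X$, and then recognize the outcome as $\nc f(\X)$; the two preceding lemmas do essentially all of the work. First I would identify the objects associated with the enlarged set $\y$. Its direction vectors are $d^1,\dots,d^m,-d^1,\dots,-d^m$, so the associated matrix is $S(\y)=\bbm S & -S\ebm\in\R^{n\times 2m}$, where $S=S(\X)$. Since $S$ has full row rank $n$ in the determined and overdetermined cases, the horizontal stack $\bbm S & -S\ebm$ also has full row rank $n$, and hence the second lemma applies to it.

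Next I would compute the pseudoinverse $\left(S(\y)^\top\right)^\dagger$. Using the transpose rule from the first lemma, $\left(S(\y)^\top\right)^\dagger=\left(S(\y)^\dagger\right)^\top$, which reduces the task to evaluating $\bbm S & -S\ebm^\dagger$. Applying the second lemma with its matrix taken to be $S$ gives $\bbm S & -S\ebm^\dagger=\frac12\bbm S^\dagger\\-S^\dagger\ebm$. Transposing this and invoking the transpose rule once more to rewrite $(S^\dagger)^\top$ as $\sxtd$, I obtain $\left(S(\y)^\top\right)^\dagger=\frac12\bbm \sxtd & -\sxtd\ebm$.

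The final step is to split the simplex-difference vector $\dsf(\y)\in\R^{2m}$ into its top and bottom blocks: the top block is exactly $\dsf(\X)$ and the bottom block is exactly $\dsf(\xm)$. Multiplying the block pseudoinverse against $\dsf(\y)$ then collapses to $\ns f(\y)=\frac12\sxtd\left(\dsf(\X)-\dsf(\xm)\right)$. I would then note that the $i$-th entry of $\dsf(\X)-\dsf(\xm)$ is $\left(f(x^0+d^i)-f(x^0)\right)-\left(f(x^0-d^i)-f(x^0)\right)=f(x^0+d^i)-f(x^0-d^i)$, so this difference equals $2\,\dcf(\X)$. The factor $\frac12$ cancels the factor $2$, leaving $\ns f(\y)=\sxtd\,\dcf(\X)=\nc f(\X)$.

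I do not expect a genuine obstacle: once the two lemmas are in hand, the argument is bookkeeping. The only points requiring care are the repeated, correct application of the transpose rule and the verification that $\bbm S & -S\ebm$ inherits the full-row-rank property of $S$ — which is precisely why the statement is restricted to the determined and overdetermined cases, since in the underdetermined case $\bbm S & -S\ebm$ has rank $m<n$ and the second lemma would not apply in this form.
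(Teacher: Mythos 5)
Your proposal is correct and follows essentially the same route as the paper: write $S(\y)=\bbm S & -S\ebm$, apply the transpose lemma and the block-pseudoinverse lemma, and split $\dsf(\y)$ into the blocks $\dsf(\X)$ and $\dsf(\xm)$. The only (immaterial) difference is the last line, where you identify $\dsf(\X)-\dsf(\xm)=2\dcf(\X)$ and conclude directly from the definition of the GCSG, whereas the paper instead uses $(S(\xm)^\top)^\dagger=-(S(\X)^\top)^\dagger$ to recognize the average of two GSGs via \eqref{eq:altcentred}.
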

\begin{proof}
We have
\begin{align*}
    \ns f(\y)&=(S(\y)^\top)^\dagger \dsf (\y)\\
    &=\left (\bbm S(\X) &-S(\X)\ebm^\top\right )^\dagger \bbm \dsf(\X)\\\dsf(\xm)\ebm=\left (\bbm S(\X) &-S(\X)\ebm^\dagger\right )^\top \bbm \dsf(\X)\\\dsf(\xm)\ebm\\
    &=\frac{1}{2}\bbm S(\X)^\dagger\\-S(\X)^\dagger\ebm^\top \bbm \dsf(\X)\\\dsf(\xm)\ebm=\frac{1}{2}\bbm (S(\X)^\top)^\dagger& -(S(\X)^\top)^\dagger  \ebm\bbm \dsf(\X)\\\dsf(\xm)\ebm\\
    &=\frac{1}{2}\left ( (S(\X)^\top)^\dagger \dsf(\X) -(S(\X)^\top)^\dagger \dsf(\xm) \right )\\
    &=\frac{1}{2}\left ( (S(\X)^\top)^\dagger \dsf(\X) +(S(\xm)^\top)^\dagger \dsf(\xm) \right )=\nc f(\X).\qedhere
\end{align*}
\end{proof}
Notice that the GCSG uses an ordered set of points. The reason is that if the position of the reference point $x^0$  is changed in the overdetermined case, then we do not necessarily get the same value. The following example illustrates this situation.
\begin{ex} Consider the sets $\X=\langle -1, 0, 1 \rangle$ and $\X_\alpha=\langle 0, 1, -1 \rangle,$ and the function $f: \R \to \R: y \mapsto y^4.$ Then
\begin{align*}
    \nc f(\X)&=(S(\X)^\top)^\dagger \dcf(\X)\\
    &=\bbm1\\2\ebm^\dagger \frac{1}{2} \bbm 0 -16\\1-81\ebm=-17.6
    \intertext{and}
    \nc f(\X_\alpha)&=\bbm 1 \\-1 \ebm^\dagger \frac{1}{2} \bbm 1 - 1\\ 1 -1 \ebm =0.
\end{align*}
\end{ex}

\section{Error bounds for the GCSG} \label{sec:errorbounds}

This section is dedicated to developing the $O(\Delta^2)$ upper bounds on the error for the GCSG. There are two instances to consider separately; first we look at the determined and overdetermined cases, then the underdetermined case. These two settings have different results, as the number of linearly independent vectors in the simplex differs.

\subsection{Determined and overdetermined cases}

An error bound for the determined case of the GCSG is established in \cite[Theorem 9.13]{ audet2017derivative}, the accuracy of which is measured in terms of $\Delta$. The GCSG error bound in the determined case has order $O(\Delta^2)$. We show that this error bound can be extended to the overdetermined case. To that end, we present Lemma \ref{lem1}, which relies on the multidimensional  second-order Taylor theorem below.

\begin{thm}\emph{\cite[Section 4.3]{Lax2017}}\label{thm:Taylor} Suppose $f:\dom(f)\subseteq \R^n\to\R$ is a $\mathcal{C}^{3}$ function on the open ball $B(x^0,\overline{\Delta}).$  Then for $x^0+d$ in the ball,
\begin{align*}
    f(x^0+d)&=f(x^0)+\nabla f(x^0)^\top d+ \frac{1}{2} d^\top \nabla^2 f(x^0) d+R_2(x^0, d).
\end{align*}
Moreover,
\begin{align*}
    \vert R_2(x^0, d) \vert \leq \frac{L}{6} \Vert d \Vert^3,
\end{align*}
where $L$ is the Lipschitz constant of the Hessian $\nabla^2 f.$
\end{thm}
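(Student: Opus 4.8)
The plan is to reduce the multidimensional claim to the one-dimensional Taylor theorem by restricting $f$ to the segment from $x^0$ to $x^0+d$. Define $g:[0,1]\to\R$ by $g(t)=f(x^0+td)$. Because $f$ is $\mathcal{C}^3$ on $B(x^0,\overline{\Delta})$ and this segment lies inside the ball, $g$ is $\mathcal{C}^3$ on $[0,1]$, and the chain rule gives $g'(t)=\nabla f(x^0+td)^\top d$ and $g''(t)=d^\top\nabla^2 f(x^0+td)\,d$. Evaluating at $t=0$ shows that the proposed expansion is precisely $g(1)=g(0)+g'(0)+\tfrac12 g''(0)+R_2(x^0,d)$, so it suffices to produce the stated bound on the scalar remainder $R_2$.

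Next I would write this remainder in integral form. Integration by parts gives the exact identity $g(1)=g(0)+g'(0)+\int_0^1(1-t)g''(t)\,dt$, and since $\int_0^1(1-t)\,dt=\tfrac12$ we may subtract $\tfrac12 g''(0)$ to obtain
\[
R_2(x^0,d)=\int_0^1(1-t)\bigl(g''(t)-g''(0)\bigr)\,dt
=\int_0^1(1-t)\,d^\top\bigl(\nabla^2 f(x^0+td)-\nabla^2 f(x^0)\bigr)d\,dt.
\]
The advantage of this form is that it isolates the difference of Hessians, which is exactly what the Lipschitz hypothesis controls; this is why I avoid the Lagrange form with $g'''$, whose bound would instead involve the third-derivative tensor rather than the Lipschitz constant of $\nabla^2 f$.

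The decisive estimate then uses the Lipschitz continuity of the Hessian. For each $t\in[0,1]$ we have $\Vert\nabla^2 f(x^0+td)-\nabla^2 f(x^0)\Vert\le L\Vert td\Vert=Lt\Vert d\Vert$, and the bound $\vert d^\top M d\vert\le\Vert M\Vert\,\Vert d\Vert^2$ for symmetric $M$ then controls the integrand in absolute value by $(1-t)\,Lt\Vert d\Vert^3$. Integrating and using $\int_0^1 t(1-t)\,dt=\tfrac16$ yields $\vert R_2(x^0,d)\vert\le\tfrac{L}{6}\Vert d\Vert^3$, as claimed. I do not expect a genuine obstacle here: once the one-variable reduction is set up, the argument is routine. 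The only points deserving care are the bookkeeping of norms—ensuring that $L$ refers to the operator norm of $\nabla^2 f$ so that the quadratic-form bound applies—and checking that the $\mathcal{C}^3$ hypothesis legitimately justifies differentiating along, and applying the integral form of the remainder over, the entire segment.
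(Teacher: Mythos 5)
Your argument is correct, but note that the paper does not prove this statement at all: Theorem \ref{thm:Taylor} is imported verbatim from \cite[Section 4.3]{Lax2017} as a known form of the multivariate Taylor theorem, so there is no internal proof to compare against. What you supply is the standard self-contained derivation, and it is sound: restricting to $g(t)=f(x^0+td)$ (legitimate, since the ball is convex and $f$ is $\mathcal{C}^3$ there), using the integral form $g(1)=g(0)+g'(0)+\int_0^1(1-t)g''(t)\,dt$, and rewriting the remainder as $\int_0^1(1-t)\,d^\top\bigl(\nabla^2 f(x^0+td)-\nabla^2 f(x^0)\bigr)d\,dt$ is exactly the right move to make the Lipschitz hypothesis on the Hessian bite and to produce the constant $\tfrac{1}{6}$ via $\int_0^1 t(1-t)\,dt$. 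The one caveat is the point you flagged yourself: the step $\vert d^\top M d\vert\le\Vert M\Vert\,\Vert d\Vert^2$ requires that $L$ be a Lipschitz constant for $\nabla^2 f$ with respect to a matrix norm dominating the operator norm; the paper leaves this convention implicit both here and where the bound is used in Lemma \ref{lem1}, so your reading is consistent with the paper's usage. Your proof therefore adds value by making the cited black box verifiable within the paper's own hypotheses.
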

\begin{lem}\label{lem1}
Let $f:\dom(f) \subseteq \R^n\to\R$ be $\mathcal{C}^{3}$ on $B(x^0,\overline{\Delta})$ and denote by $L$ the Lipschitz constant of $\nabla^2f$. Then for any $d\in B(x^0,\overline{\Delta})$, we have
\begin{equation}\label{eq1}|f(x^0+d)-f(x^0-d)-2\nabla f(x^0)^\top d|\leq\frac{L}{3}\|d\|^3.\end{equation}
\end{lem}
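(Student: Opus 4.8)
The plan is to obtain the bound by applying the second-order Taylor expansion of Theorem \ref{thm:Taylor} twice and subtracting. First I would expand $f(x^0+d)$ directly as
\begin{equation*}
f(x^0+d)=f(x^0)+\nabla f(x^0)^\top d+\tfrac{1}{2}d^\top\nabla^2 f(x^0)d+R_2(x^0,d),
\end{equation*}
which is valid since $\|d\|<\overline{\Delta}$ places $x^0+d$ in the ball. Next I would apply the same theorem with the displacement $-d$ in place of $d$; because $\|-d\|=\|d\|<\overline{\Delta}$, the point $x^0-d$ also lies in $B(x^0,\overline{\Delta})$, so this expansion is equally legitimate and yields the linear term $-\nabla f(x^0)^\top d$ while leaving the quadratic term unchanged.

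The crucial observation driving the estimate is that the expansion is even in $d$ except for the first-order term: the constant $f(x^0)$ and the Hessian form $\tfrac{1}{2}d^\top\nabla^2 f(x^0)d$ are identical in both expansions, whereas the gradient term flips sign. Subtracting the second expansion from the first therefore cancels the constant and quadratic contributions and doubles the linear one, giving
\begin{equation*}
f(x^0+d)-f(x^0-d)-2\nabla f(x^0)^\top d=R_2(x^0,d)-R_2(x^0,-d).
\end{equation*}
This is the structural heart of the argument: what survives on the right is purely the difference of two third-order remainders.

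To finish I would take absolute values, apply the triangle inequality, and invoke the remainder bound $\vert R_2(x^0,d)\vert\leq\tfrac{L}{6}\|d\|^3$ from Theorem \ref{thm:Taylor} to each term separately, using $\|-d\|=\|d\|$ on the second. This produces
\begin{equation*}
\vert f(x^0+d)-f(x^0-d)-2\nabla f(x^0)^\top d\vert\leq\tfrac{L}{6}\|d\|^3+\tfrac{L}{6}\|d\|^3=\tfrac{L}{3}\|d\|^3,
\end{equation*}
as claimed. I do not anticipate a genuine obstacle here; the proof is essentially mechanical once the even/odd cancellation is recognized. The only point requiring a moment of care is confirming that the hypotheses of Theorem \ref{thm:Taylor} apply to the reflected point $x^0-d$, which follows immediately from the norm being unchanged under negation.
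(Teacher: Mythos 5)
Your proposal is correct and follows essentially the same route as the paper's own proof: expand $f(x^0+d)$ and $f(x^0-d)$ via Theorem \ref{thm:Taylor}, subtract so that the constant and Hessian terms cancel, and bound $|R_2(x^0,d)-R_2(x^0,-d)|$ by the triangle inequality together with the remainder estimate $\frac{L}{6}\|d\|^3$. No gaps; the argument matches the paper step for step.
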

\begin{proof}
From Theorem \ref{thm:Taylor}, we know that
\begin{align}f(x^0+d)&=f(x^0)+\nabla f(x^0)^\top d+\frac{1}{2}d^\top\nabla^2f(x^0)d+R_2(x^0, d),\label{eq2}\\
f(x^0-d)&=f(x^0)-\nabla f(x^0)^\top d+\frac{1}{2}d^\top\nabla^2f(x^0)d+R_2(x^0, -d).\label{eq3}
\end{align}
Subtracting \eqref{eq3} from \eqref{eq2}, we find
\begin{align*}
f(x^0+d)-f(x^0-d)-2\nabla f(x^0)^\top d&=R_2(x^0, d)-R_2(x^0, -d)\\
\Rightarrow |f(x^0+d)-f(x^0-d)-2\nabla f(x^0)^\top d|&=|R_2(x^0, d)-R_2(x^0, -d)|.
\end{align*}
Also from Theorem \ref{thm:Taylor}, we know that
\begin{equation}\label{eq6}|R_2(x^0, \pm d)|\leq\frac{L\|d\|^3}{6}.\end{equation}
Therefore, by \eqref{eq6} and the triangle inequality, we obtain \eqref{eq1}.
\end{proof}
Now we are ready for our first error bound result, for the determined and overdetermined cases.
\begin{thm}\label{thm:det}
Let $f:\dom(f) \subseteq \R^n\to\R$ be $\mathcal{C}^{3}$ on $B(x^0,\overline{\Delta}),$ and denote by $L$ the Lipschitz constant of $\nabla^2f$. Let $\X=\left \langle x^0, x^1, \dots, x^m \right \rangle$ be an ordered set with radius $\Delta<\overline{\Delta}$ and let $\rank S=n$ (determined and overdetermined cases).
 Then
\begin{equation*}\label{eq:determinederror}\|\nc f(\X)-\nabla f(x^0)\|\leq\frac{L\sqrt m}{6}\left\|(\widehat{S}^\top)^\dagger\right\|\Delta^2,\end{equation*}
where $\widehat{S}=S/\Delta$.
\end{thm}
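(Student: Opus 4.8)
The plan is to exploit the full row rank hypothesis to turn $(S^\top)^\dagger$ into a left inverse of $S^\top$, which lets me reconstruct the true gradient and collapse the entire error into a single application of the pseudoinverse. Since $\rank S = n$, the matrix $S^\top \in \R^{m\times n}$ has full column rank, so by the left-inverse property recorded after Definition~\ref{def:mpinverse} we have $(S^\top)^\dagger S^\top = \Id$. Hence $\nabla f(x^0) = (S^\top)^\dagger S^\top \nabla f(x^0)$, and using $\nc f(\X) = (S^\top)^\dagger \dc$ I would write the error as
\begin{equation*}
\nc f(\X) - \nabla f(x^0) = (S^\top)^\dagger \left( \dc - S^\top \nabla f(x^0) \right).
\end{equation*}

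The next step is to bound the residual vector $\dc - S^\top \nabla f(x^0)$ componentwise. Its $i$-th entry is $\tfrac12\left( f(x^0+d^i) - f(x^0-d^i) \right) - \nabla f(x^0)^\top d^i$, which is exactly one half of the quantity estimated in Lemma~\ref{lem1}. Applying that lemma with $d = d^i$ and using $\|d^i\| \leq \Delta$ yields an entrywise bound of $\tfrac{L}{6}\|d^i\|^3 \leq \tfrac{L}{6}\Delta^3$. Since the residual has $m$ entries, passing to the Euclidean norm gives $\left\| \dc - S^\top \nabla f(x^0) \right\| \leq \tfrac{L\sqrt m}{6}\Delta^3$.

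Combining these with submultiplicativity of the operator norm produces $\| \nc f(\X) - \nabla f(x^0) \| \leq \| (S^\top)^\dagger \|\, \tfrac{L\sqrt m}{6}\Delta^3$. The final ingredient is the normalization $\widehat{S} = S/\Delta$: because the Moore--Penrose pseudoinverse scales as $(cA)^\dagger = c^{-1}A^\dagger$ for $c \neq 0$ (immediate from checking the four defining equations), we have $(S^\top)^\dagger = \Delta^{-1}\shatxtd$, hence $\| (S^\top)^\dagger \| = \Delta^{-1}\| \shatxtd \|$. Substituting this absorbs one factor of $\Delta$ and leaves precisely $\tfrac{L\sqrt m}{6}\| \shatxtd \|\Delta^2$, as claimed.

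The only genuinely nontrivial step is the first one: recognizing that the full row rank of $S$ is exactly the condition that makes $(S^\top)^\dagger$ a left inverse of $S^\top$, so that $\nabla f(x^0)$ can be rewritten and the whole error factors through a single pseudoinverse acting on a small residual. Everything afterward --- the componentwise Taylor estimate supplied by Lemma~\ref{lem1}, the $\sqrt m$ arising from the passage to the Euclidean norm, and the pseudoinverse rescaling --- is routine.
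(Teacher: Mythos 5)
Your proof is correct and follows essentially the same route as the paper's: use the full column rank of $S^\top$ to make $(S^\top)^\dagger$ a left inverse so the error factors as $(S^\top)^\dagger(\dc - S^\top\nabla f(x^0))$, bound the residual entrywise via Lemma~\ref{lem1}, and rescale by $\Delta$ to pass to $\widehat{S}$. The only cosmetic difference is the order of operations (you factor first and then estimate the residual, the paper estimates first), and your direct passage from the entrywise bound to the Euclidean norm via the factor $\sqrt m$ is in fact slightly cleaner than the paper's intermediate $\ell_1$ comparison.
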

\begin{proof}
 We have
\begin{align*}
    \|\dcf-S^\top\nabla f(x^0)\|&=\sqrt{\sum\limits_{i=1}^m\left(\frac{f(x^0+d^i)-f(x^0-d^i)}{2}-(d^i)^\top\nabla f(x^0)\right)^2}\\
    &\leq \sum\limits_{i=1}^m\left\vert \frac{f(x^0+d^i)-f(x^0-d^i)}{2}-(d^i)^\top\nabla f(x^0)\right \vert.
\end{align*}
Now using Lemma \ref{lem1} and the definition of $\Delta$, we have
\begin{equation*}\label{eq7}\|\dcf-S^\top\nabla f(x^0)\|\leq\frac{L\sqrt m\Delta^3}{6}.\end{equation*}
Since $S^\top$ has full column rank, $(S^\top)^\dagger$ is a left inverse of $S^\top$. Thus,
\begin{align*}
\|\nc f(\X)-\nabla f(x^0)\|&=\left\|(S^\top)^\dagger\dcf-(S^\top)^\dagger S^\top\nabla f(x^0)\right\|\\
&\leq\left\|(S^\top)^\dagger\right\|\|\dcf-S^\top\nabla f(x^0)\|\\
&\leq\left\|(S^\top)^\dagger\right\|\frac{L\sqrt m\Delta^3}{6}\\
&=\left\|(\widehat{S}^\top)^\dagger\right\|\frac{L\sqrt m\Delta^2}{6}.\qedhere
\end{align*}
\end{proof}

\subsection{Underdetermined case}

When developing the underdetermined case, we obtain the error bound by considering $f$ not on all of $\R^n$, but on a subspace that is dependent on the linearly independent vectors in $S$. Suppose $\uu=\Span S\neq\R^n$ (i.e.\ $\rank S<n$). Note that $\uu+x^0$ is the affine hull of $\X$, which we denote by $\aff\X$. Since all of our sample points lie in $\uu+x^0$, it is unreasonable to expect the ability to estimate gradients accurately outside of this affine hull. The following example demonstrates this problem.
\begin{ex}\label{ex:proj}
Let $f:\R^3\to\R: y\mapsto ay_1+(a+1)y_2+(4-a)y_3$, $a\in\R$. Consider the sample set$$\X=\left\langle x^0,x^1,x^2\right\rangle=\left\langle\left[\begin{array}{c}0\\0\\0\end{array}\right],\left[\begin{array}{c}1\\0\\1\end{array}\right],\left[\begin{array}{c}0\\1\\1\end{array}\right]\right\rangle.$$Then regardless of the value of $a$, we have
$$f(x^0)=0,\qquad f(x^1)=a+4-a=4,\qquad f(x^2)=a+1+4-a=5.$$
Therefore, it is impossible to determine $\nabla f$ using $\X$.
\end{ex}
We wish to create an approximate gradient that is accurate on the subspace $\uu$. Note that $\uu=\{y :y_3=y_1+y_2\}$. Define $\Proj_{\uu}$ as the projection operator onto $\uu\subseteq\R^n$. We restrict $f$ to the domain $\uu$ by defining the following function:
$$f_{\uu}(y)=f(x^0+\Proj_{\uu}(y-x^0)).$$Then by \cite[Lemma 4.2]{hare2019chain}, we have the following useful relationship between the projection of the gradient of $f$ onto $\uu$ and the so-called $\U$-gradient of $f$ denoted by $\nabla f_{\uu}$:
$$\nabla f_{\uu}=\Proj_{\uu}\nabla f.$$
The function $f_{\uu}$ effectively restricts the domain of $f$ to the subspace $\uu$ and provides us with gradient information on $\aff\X$.
 
\begin{ex} Let $f:\R^3\to\R:y \mapsto a^\top y=a_1y_1+a_2y_2+a_3y_3$ and$$\X=\left\langle x^0,x^1,x^2\right\rangle=\left\langle\left[\begin{array}{c}0\\0\\0\end{array}\right],
\left[\begin{array}{c}1\\0\\1\end{array}\right],\left[\begin{array}{c}0\\1\\1\end{array}\right]\right\rangle.$$We have $$S=\left[\begin{array}{c c}1&0\\0&1\\1&1\end{array}\right].$$

By \emph{\cite[\S 2.2 eq. (3)]{vusmoothness} (see \eqref{eq:projsum2} below)}, the projection of $y$ onto $\uu$ is given by
$$\Proj_{\uu}y=\frac{1}{3} \bbm 2&-1&1\\-1&2&1\\1&1&2\ebm \bbm y_1\\y_2\\y_3 \ebm=\frac{1}{3}\left[\begin{array}{c}2y_1-y_2+y_3\\-y_1+2y_2+y_3\\y_1+y_2+2y_3\end{array}\right].$$
Thus,
\begin{align*}f_{\uu}(y)=f(x^0+\Proj_{\uu}(y))&=a_1\frac{2y_1-y_2+y_3}{3}+a_2\frac{-y_1+2y_2+y_3}{3}+a_3\frac{y_1+y_2+2y_3}{3}\\
&=\frac{2a_1-a_2+a_3}{3}y_1+\frac{-a_1+2a_2+a_3}{3}y_2+\frac{a_1+a_2+2a_3}{3}y_3\end{align*}
and $$\nabla f_{\uu}(y)=\frac{1}{3}\left[\begin{array}{c}2a_1-a_2+a_3\\-a_1+2 a_2+a_3\\a_1+a_2+2a_3\end{array}\right].$$
On the other hand, we have
\begin{align*}
\Proj_{\uu}\nabla f(y)=\Proj_{\uu}a=\frac{1}{3} \bbm 2&-1&1\\-1&2&1\\1&1&2\ebm \left[\begin{array}{c}a_1\\a_2\\a_3\end{array}\right]=\frac{1}{2}\left[\begin{array}{c}2a_1-a_2+a_3\\-a_1+2 a_2+a_3\\a_1+a_2+2a_3\end{array}\right].
\end{align*}
\end{ex}
In the underdetermined case below, we will work with the Moore--Penrose inverse $S^\dagger$.  By \cite[\S 2.2 eq. (3)]{vusmoothness}, if $S$ has full column rank, then
\begin{equation}\label{eq:projsum2}
\Proj_{\uu}y=S(S^\top S)^{-1}S^\top y=(S^\top)^\dagger S^\top y.
\end{equation}
With this in mind, we are ready to provide an error bound for the underdetermined case.
\begin{thm} \label{thm:under}
Let $f:\dom(f) \subseteq \R^n\to\R$ be $\mathcal{C}^{3}$ on $B(x^0,\overline{\Delta})$ and denote by $L$ the Lipschitz constant of $\nabla^2f$.
Let $\X=\langle x^1,\ldots,x^m\rangle$ be an ordered set   with radius $\Delta<\overline{\Delta}$. Let $\rank S=m<n$ (underdetermined case) and let $\uu=\Span S.$  Define $\widehat{S}=S/\Delta$. Then
$$
\|\nabla^c f (\X) - \nabla f_{\uu}(x^0)\| =
\|\Proj_{\uu}\nabla^c f(\X)-\Proj_{\uu}\nabla f(x^0)\|
\leq \frac{L\sqrt m}{6}\left\|(\widehat{S}^\top)^\dagger\right\|\Delta^2.$$
\end{thm}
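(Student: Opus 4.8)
The plan is to mirror the computation in the proof of Theorem~\ref{thm:det}, with the single structural change that in the underdetermined regime $S^\top$ has full row rank rather than full column rank, so $(S^\top)^\dagger$ is no longer a left inverse of $S^\top$. Instead, by \eqref{eq:projsum2}, $(S^\top)^\dagger S^\top = \Proj_{\uu}$, and this is precisely what forces the projected gradient $\Proj_{\uu}\nabla f(x^0) = \nabla f_{\uu}(x^0)$ into the conclusion in place of $\nabla f(x^0)$.

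First I would settle the asserted equality. Since $\nabla f_{\uu}(x^0) = \Proj_{\uu}\nabla f(x^0)$, it suffices to check that $\nc f(\X)$ already lies in $\uu = \Span S$, so that applying $\Proj_{\uu}$ to it changes nothing. By Definition~\ref{def:GCSG}, $\nc f(\X) = (S^\top)^\dagger\dc$, and the range of a Moore--Penrose pseudoinverse satisfies $\ran((S^\top)^\dagger) = \ran(S) = \uu$; equivalently, using \eqref{eq:projsum2} together with the identity $(S^\top)^\dagger S^\top (S^\top)^\dagger = (S^\top)^\dagger$ from Definition~\ref{def:mpinverse}, one computes $\Proj_{\uu}\nc f(\X) = (S^\top)^\dagger S^\top(S^\top)^\dagger\dc = (S^\top)^\dagger\dc = \nc f(\X)$. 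This yields $\|\nc f(\X) - \nabla f_{\uu}(x^0)\| = \|\Proj_{\uu}\nc f(\X) - \Proj_{\uu}\nabla f(x^0)\|$.

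Next I would establish the bound. Using $\nc f(\X) = (S^\top)^\dagger\dc$ and $\Proj_{\uu}\nabla f(x^0) = (S^\top)^\dagger S^\top\nabla f(x^0)$ from \eqref{eq:projsum2}, the difference factors as $\Proj_{\uu}\nc f(\X) - \Proj_{\uu}\nabla f(x^0) = (S^\top)^\dagger(\dc - S^\top\nabla f(x^0))$. Submultiplicativity of the operator norm then gives $\|\Proj_{\uu}\nc f(\X) - \Proj_{\uu}\nabla f(x^0)\| \leq \|(S^\top)^\dagger\|\,\|\dc - S^\top\nabla f(x^0)\|$. The residual $\|\dc - S^\top\nabla f(x^0)\|$ is bounded exactly as in Theorem~\ref{thm:det}: expanding it componentwise and applying Lemma~\ref{lem1} together with the definition of $\Delta$ produces $\|\dc - S^\top\nabla f(x^0)\| \leq \tfrac{L\sqrt m\,\Delta^3}{6}$.

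Finally I would rescale. Since $\widehat{S} = S/\Delta$, we have $(S^\top)^\dagger = \tfrac{1}{\Delta}(\widehat{S}^\top)^\dagger$, hence $\|(S^\top)^\dagger\| = \tfrac{1}{\Delta}\|(\widehat{S}^\top)^\dagger\|$, and combining this with the residual bound yields $\tfrac{L\sqrt m}{6}\|(\widehat{S}^\top)^\dagger\|\Delta^2$, as claimed. The only genuine obstacle here is conceptual rather than computational: one must recognize that the left-inverse step of Theorem~\ref{thm:det} has to be replaced by the projection identity $(S^\top)^\dagger S^\top = \Proj_{\uu}$, and verify that the projection on the left-hand side of the equality is redundant because $\nc f(\X) \in \uu$. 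Once these two observations are secured, the estimate proceeds identically to the determined case.
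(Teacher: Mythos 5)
Your proposal is correct and follows essentially the same route as the paper's proof: both reduce the problem to $\|(S^\top)^\dagger\|\,\|\dc - S^\top\nabla f(x^0)\|$ via the projection formula \eqref{eq:projsum2} and the Moore--Penrose identities, then apply Lemma~\ref{lem1} componentwise and rescale with $\widehat S = S/\Delta$. Your justification of the initial equality through $\ran((S^\top)^\dagger)=\ran(S)=\uu$ is a slightly more explicit rendering of the paper's observation that $\nc f(\X)=\nc f_{\uu}(\X)=\Proj_{\uu}\nc f(\X)$, but the argument is the same in substance.
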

\begin{proof}  By definition, $\nabla f_{\uu}(x^0)=\Proj_{\uu}\nabla f(x^0)$. Also, note that for all $i \in \{0, 1, \dots, m\}$, we have $f(x^i)=f_{\uu}(x^i)$. So $\nc f(\X)=\nc f_{\uu} (\X),$ and by definition, $\nc f_{\uu}(\X)= \Proj_{\uu} \nabla^c f(\X).$ Thus, the first equality holds.

From the definition of Lipschitz continuity, restricting $f$ to $\uu=\Span S\subseteq\R^n$ does not alter $\nabla^2f$ from being $L$-Lipschitz. The projection of a point onto $\uu$ is given by \eqref{eq:projsum2}, which yields
\begin{align*}
\|\Proj_{\uu}\nabla^cf(\X)-\Proj_{\uu}\nabla f(x^0)\|&=\|(S^\top)^\dagger S^\top(S^\top)^\dagger\delta^c_f-(S^\top)^\dagger S^\top\nabla f(x^0)\|.
\end{align*}
Since $S$ has full column rank, $S^\top$ has full row rank, which yields $S^\top (S^\top)^\dagger =\Id_m$. Hence,
\begin{align*}
\|\Proj_{\uu}\nabla^cf(\X)-\Proj_{\uu}\nabla f(x^0)\|&=\|(S^\top)^\dagger\delta^c_f-(S^\top)^\dagger  S^\top\nabla f(x^0)\|\\
&\leq\|(S^\top)^\dagger\|\|\delta^c_f-S^\top\nabla f(x^0)\|.
\end{align*}
Using Lemma \ref{lem1}, we have
\begin{align*}
\|\Proj_{\uu}\nabla^cf(\X)-\Proj_{\uu}\nabla f(x^0)\|&\leq \|(S^\top)^\dagger\|\frac{L\sqrt m}{6}\Delta^3\\
&\leq\|(\widehat{S}^\top)^\dagger\|\frac{L\sqrt m}{6}\Delta^2.\qedhere
\end{align*}
\end{proof}
We observe that this result is almost identical to the Theorem \ref{thm:det} result, except that it is in the reduced space rather than in $\R^n$. With only $m<n$ linearly independent vectors in $S$, this is the best result possible for the underdetermined case.

\section{Calculus rules}\label{sec:calc}

In this section, we provide calculus rules for the GCSG. Throughout this section, all  functions have a domain contained in $\R^n$  and map to $\R$. The calculus formulae for the GCSG follow directly from the calculus rules for the GSG  presented in \cite{hare2020} by using \eqref{eq:altcentred}.  Before introducing the calculus rules for the GSCG, let us recall the definition of the \emph{product difference vector} and the calculus rules for the GSG (Table  \ref{tab:calcrulesgsg}) introduced in \cite{hare2020}.

\begin{df}[Product difference vector]
Let $\X=\langle x^0, x^0+d^1, \dots, x^0+d^m \rangle$ be an ordered set of $m+1$ distinct points contained in $\dom(f)$ and $\dom(g)$. The \emph{product difference vector} of $f$ and $g$  over $\X$ is denoted by $\delta^s_{f|g}(\X)$ and defined by
\[
\dfxg (\X)=\begin{bmatrix}(f(x^0+d^1)-f(x^0))(g(x^0+d^1)-g(x^0))\\(f(x^0+d^2)-f(x^0))(g(x^0+d^2)-g(x^0)) \\\vdots\\(f(x^0+d^m)-f(x^0))(g(x^0+d^m)-g(x^0))\end{bmatrix}.
\]
\end{df}
\begin{table}[ht]
\caption{\textbf{Calculus rules for the GSG}}\label{tab:calcrulesgsg}
\center{
\begin{tabular}{ |p{1.5cm}||p{6.8cm}|p{7.1cm}| }

 \hline
\textbf{Rule}& \textbf{Formula} &\textbf{$E^s$}\\
 \hline
 Product of $2$ & $f(x^0)\ns g(\X)+g(x^0)\ns f(\X)+E^s_{fg}$&$\left (S^\top\right )^\dagger \dfxg$\\
 Product of $k$&$\sum_{i=1}^{k}\left(\prod_{j\neq i}f_j (x^0) \right)\ns f_i(\X)+ E^s_{f_1\cdots f_n}$ &$\sxtd \left ( \delta^s_{f_1\cdots f_n} -
 \sumdk \right )$\\
 Positive power &$n[f(x^0)]^{n-1}\ns f(\X)+E^s_{f^n}$ & $\sxtd\left ( \sum_{i=1}^{n-1}[f(x^0)]^{n-1-i}\dfftothei \right )$\\
 Negative power &$-n[f(x^0)]^{-n-1} \ns f(\X)-E_{f^{-n}}$&$\frac{\sxtd}{[f(x^0)]^n}\left ( n \doneoverff - \sum_{i=1}^{n-1}[f(x^0)]^{1+i} \dfminusonefminusi \right )$\\
 Quotient&$\frac{g(x^0)\ns f(\X)-f(x^0)\ns g(\X)}{[g(x^0)]^2} -E_{\frac{f}{g}}$ &$\frac{\sxtd}{g(x^0)} \dfovergg$ \\
 \hline
\end{tabular}}
\end{table}

Note that the GCSG is a linear operator. Indeed, this follows from the facts that the GSG is a linear operator \cite[Proposition 9] {Regis2015} and the GCSG is the average of two GSGs.  Using this, we can adjust the product rule for the GSG to a product rule for the GCSG.

\begin{prop}[GCSG product rule]\label{prop:productrule} Let $\X=\langle x^0,x^0+d^1,\ldots,x^0+d^m\rangle$ be an ordered set of $m+1$ points such that $\X \cup \X^{-}$ is in $\dom(f)$ and $\dom(g)$. Then
\begin{equation*}\nc(fg)(\X)=f(x^0)\nc g(\X)+g(x^0)\nc f(\X)+\ecfg(\X),\end{equation*}
where $\ecfg(\X)=\frac{1}{2}\left(\esfg(\X)+\esfg (\X^-)\right)$.
\end{prop}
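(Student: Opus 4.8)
The plan is to reduce the GCSG product rule to the already-established GSG product rule from Table \ref{tab:calcrulesgsg} by passing through the averaging identity \eqref{eq:altcentred}. The starting observation is that $\nc(fg)(\X)=\frac{1}{2}\left(\ns(fg)(\X)+\ns(fg)(\xm)\right)$, so it suffices to apply the GSG product rule to each of the two simplex gradients on the right and then recombine.

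First I would apply the GSG product rule to $\ns(fg)(\X)$, obtaining $f(x^0)\ns g(\X)+g(x^0)\ns f(\X)+\esfg(\X)$. The crucial point is that $\X$ and $\xm$ share the same reference point $x^0$; thus applying the same rule to $\ns(fg)(\xm)$ produces $f(x^0)\ns g(\xm)+g(x^0)\ns f(\xm)+\esfg(\xm)$, with identical scalar coefficients $f(x^0)$ and $g(x^0)$.

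Next I would average the two expressions. Because the coefficients match, they factor out of the average, leaving $f(x^0)\cdot\frac{1}{2}\left(\ns g(\X)+\ns g(\xm)\right)+g(x^0)\cdot\frac{1}{2}\left(\ns f(\X)+\ns f(\xm)\right)+\frac{1}{2}\left(\esfg(\X)+\esfg(\xm)\right)$. Applying \eqref{eq:altcentred} a second time collapses the first two averaged sums into $\nc g(\X)$ and $\nc f(\X)$, and the last term is precisely $\ecfg(\X)$ by its definition in the statement.

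The computation is essentially bookkeeping once the reduction is set up; the only point requiring care — and the reason the identity comes out so cleanly — is that the reference point $x^0$ is common to $\X$ and $\xm$, so the constants $f(x^0)$ and $g(x^0)$ are the same in both applications of the GSG rule and can be pulled through the average. Linearity of the GCSG, noted just before the statement, is what guarantees the averaging step is legitimate.
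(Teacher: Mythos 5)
Your proposal is correct and follows the paper's proof exactly: both decompose $\nc(fg)(\X)$ via \eqref{eq:altcentred} into the average of $\ns(fg)(\X)$ and $\ns(fg)(\X^-)$, apply the GSG product rule to each (using that the shared reference point $x^0$ makes the coefficients $f(x^0)$ and $g(x^0)$ identical), and then regroup and apply \eqref{eq:altcentred} again. No differences worth noting.
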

\begin{proof}
We have
\footnotesize\begin{align*}
\nc (fg)(\X)&=\frac{1}{2}(\ns(fg)(\X)+\ns (fg)(\X^-))\\
&=\frac{1}{2}(f(x^0)\ns g(\X)+g(x^0)\ns f(\X)+\esfg(\X)+f(x^0)\ns g(\X^-)+g(x^0)\ns f(\X^-)+\esfg(\X^-))\\
&=f(x^0)\frac{1}{2}(\ns g(\X)+\ns g(\X^-))+g(x^0)\frac{1}{2}(\ns f(\X)+\ns f(\X^-))+\frac{1}{2}(\esfg (\X)+\esfg (\X^-))\\
&=f(x^0)\nc g(\X)+g(x^0)\nc f(\X)+\ecfg(\X).\qedhere
\end{align*}
\normalsize
\end{proof}

The averaging technique used in Proposition \ref{prop:productrule} can be used to create calculus rules for the product of $k$ functions, positive powers, and negative powers.  We omit proofs for the next three results, as they are straightforward.

\begin{cor}[GCSG product rule, $k$ functions]
Let $f_i:\dom(f_i) \subseteq \R^n\to\R$ for all $i\in\{1,2,\ldots,k\}$, $k\geq 2$ and let $\X=\langle x^0,x^0+d^1,\ldots,x^0+d^m\rangle$ be an ordered set of $m+1$ points such that $\X \cup \X^{-}$ is in $\dom(f_i)$ for all $i$. Then
\begin{equation*}\nc(f_1\cdots f_k)(\X)=\sum\limits_{i=1}^k\prod\limits_{j\neq i}f_j(x^0)\nc f_i(\X)+\ecfonefk(\X),\end{equation*}
where
\begin{equation*}\ecfonefk (\X)=\frac{1}{2}\left(\esfonefk (\X)+\esfonefk (\X^-)\right).\end{equation*}
\end{cor}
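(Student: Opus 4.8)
The plan is to mirror the argument of Proposition \ref{prop:productrule} verbatim, replacing the two-function product rule with the $k$-function product rule recorded in Table \ref{tab:calcrulesgsg} and again leveraging the averaging identity \eqref{eq:altcentred}. First I would expand the left-hand side using \eqref{eq:altcentred}:
$$\nc(f_1\cdots f_k)(\X)=\frac{1}{2}\left(\ns(f_1\cdots f_k)(\X)+\ns(f_1\cdots f_k)(\xm)\right).$$
Next I would apply the GSG product rule for $k$ functions to each of the two GSG terms, writing each as $\sum_{i=1}^{k}\left(\prod_{j\neq i}f_j(x^0)\right)\ns f_i(\cdot)+\esfonefk(\cdot)$, evaluated at $\X$ and at $\xm$ respectively.

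The key observation—and the only point requiring a moment's care—is that $\X$ and $\xm$ share the same reference point $x^0$, so the scalar coefficients $\prod_{j\neq i}f_j(x^0)$ are identical in both expansions. These coefficients can therefore be pulled outside the average, and for each fixed $i$ I would group
$$\frac{1}{2}\left(\prod_{j\neq i}f_j(x^0)\,\ns f_i(\X)+\prod_{j\neq i}f_j(x^0)\,\ns f_i(\xm)\right)=\prod_{j\neq i}f_j(x^0)\cdot\frac{1}{2}\left(\ns f_i(\X)+\ns f_i(\xm)\right),$$
which, again by \eqref{eq:altcentred}, equals $\prod_{j\neq i}f_j(x^0)\,\nc f_i(\X)$. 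Collecting the two leftover error vectors yields $\frac{1}{2}\left(\esfonefk(\X)+\esfonefk(\xm)\right)=\ecfonefk(\X)$, which is exactly the claimed error term.

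Since every step is an equality, no estimation or inequality enters, and the argument closes immediately once the grouping over the index $i$ is performed. I do not expect any genuine obstacle here: the entire content is the linearity of the averaging operator combined with the fact that the product-rule coefficients depend only on the common base point $x^0$ (not on the displacement vectors $\pm d^i$). The bookkeeping of the sum over $i$ is the sole place where a slip could occur, but it is routine, which is precisely why the paper flags this result as straightforward and omits the proof.
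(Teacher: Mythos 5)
Your proposal is correct and matches the paper's intended argument: the paper omits the proof as straightforward, stating only that it follows by the same averaging technique as Proposition \ref{prop:productrule}, which is exactly what you do — expand via \eqref{eq:altcentred}, apply the GSG $k$-function product rule to each of the two terms, and use that the coefficients $\prod_{j\neq i}f_j(x^0)$ are common to $\X$ and $\X^-$ since both share the reference point $x^0$. No gaps.
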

\begin{cor}[GCSG power rule]
Let $\X=\langle x^0,x^0+d^1,\ldots,x^0+d^m\rangle$ be an ordered set of $m+1$ points such that $\X \cup \X^{-}$ is in $\dom(f)$. Let  $k\in\N$. Then
\begin{equation*}\nc f^k(\X)=k(f(x^0))^{k-1}\nc f(\X)+\ecfk(\X),\end{equation*}
where
\begin{equation*}\ecfk(\X)=\frac{1}{2}\big(\esfk(\X)+\esfk(\X^-)\big).\end{equation*}
\end{cor}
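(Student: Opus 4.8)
The plan is to reproduce the averaging argument of Proposition \ref{prop:productrule} essentially verbatim, now applied to the positive-power rule for the GSG rather than to its product rule. The engine is identity \eqref{eq:altcentred}, which expresses the GCSG as the mean of the GSG over $\X$ and over $\xm$; once the positive-power rule from Table \ref{tab:calcrulesgsg} is substituted into each of the two summands, the claimed formula drops out after collecting terms. Since the linearity of the GSG (hence of the GCSG) has already been recorded, no new structural input is needed.

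Concretely, I would first write
\[
\nc f^k(\X)=\frac{1}{2}\left(\ns f^k(\X)+\ns f^k(\xm)\right)
\]
using \eqref{eq:altcentred}. Next I would invoke the positive-power rule on each term, giving $\ns f^k(\X)=k(f(x^0))^{k-1}\ns f(\X)+\esfk(\X)$ together with the analogous expansion over $\xm$. The decisive observation is that the sample sets $\X$ and $\xm$ share the same reference point $x^0$, so the scalar coefficient $k(f(x^0))^{k-1}$ is identical in both expansions and can be pulled outside the average. Regrouping then yields
\[
\nc f^k(\X)=k(f(x^0))^{k-1}\cdot\frac{1}{2}\left(\ns f(\X)+\ns f(\xm)\right)+\frac{1}{2}\left(\esfk(\X)+\esfk(\xm)\right),
\]
and I would close by recognizing the first mean as $\nc f(\X)$ (again by \eqref{eq:altcentred}) and the second as precisely the definition of $\ecfk(\X)$.

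There is essentially no genuine obstacle here, and the authors themselves flag the result as straightforward. The only point deserving a moment of care — and the sole reason it does not follow as a one-line specialization — is the shared-reference-point remark above: if $x^0$ were allowed to shift between $\X$ and $\xm$, the coefficient $k(f(x^0))^{k-1}$ would fail to factor out of the average, exactly the order-sensitivity illustrated by the example following Definition \ref{def:GCSG}. Because $\xm$ is defined to retain $x^0$ as its reference point, this difficulty never materializes, and the computation closes immediately.
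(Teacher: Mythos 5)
Your proposal is correct and matches the paper's intent exactly: the paper omits this proof as straightforward, stating only that it follows from the averaging technique of Proposition \ref{prop:productrule}, which is precisely the argument you give (expand via \eqref{eq:altcentred}, apply the GSG power rule to each of $\X$ and $\xm$, and factor out the coefficient $k(f(x^0))^{k-1}$, which is common to both because the two sets share the reference point $x^0$). Your closing remark about why the shared reference point matters is a correct and worthwhile observation, consistent with the order-sensitivity example following Definition \ref{def:GCSG}.
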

\begin{prop}[GCSG quotient rule] \label{prop:gcsgquotient}
Let $\X=\langle x^0,x^0+d^1,\ldots,x^0+d^m\rangle$ be an ordered set of $m+1$ points such that $\X \cup \X^-$ is in $\dom(f) \cap \dom(g)$ and  for which $g(x^0)$, $g(x^0\pm d^1)$, $\ldots$, $g(x^0\pm d^m)$ are all nonzero. Then
\begin{equation*}\nc\left(\frac{f}{g}\right)(\X)=\frac{g(x^0)\nc f(\X)-f(x^0)\nc g(\X)}{g^2(x^0)}-\ecfoverg (\X),\end{equation*}
where
\begin{equation*}\ecfoverg (\X)=\frac{1}{2}\Big(\esfoverg (\X)+\esfoverg (\X^-)\Big).\end{equation*}
\end{prop}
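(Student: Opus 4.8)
The plan is to mirror the proof of the GCSG product rule (Proposition \ref{prop:productrule}) exactly, exploiting the two structural facts already in hand: the averaging identity \eqref{eq:altcentred}, which reads $\nc h(\X)=\frac{1}{2}(\ns h(\X)+\ns h(\xm))$ for any admissible $h$, and the GSG quotient rule recorded in Table \ref{tab:calcrulesgsg}. Since the GCSG is linear and is literally the average of two GSGs, the computation reduces to applying the known GSG quotient formula to both $\X$ and $\xm$ and then averaging the results.

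First I would expand $\nc(f/g)(\X)$ via \eqref{eq:altcentred} as $\frac{1}{2}\big(\ns(f/g)(\X)+\ns(f/g)(\xm)\big)$. Next I would substitute the GSG quotient rule into each summand, writing
$$\ns\left(\frac{f}{g}\right)(\X)=\frac{g(x^0)\ns f(\X)-f(x^0)\ns g(\X)}{g^2(x^0)}-\esfoverg(\X)$$
and the analogous expression over $\xm$. The crucial observation is that $\X$ and $\xm$ share the common reference point $x^0$, so the scalar coefficients $g(x^0)$, $f(x^0)$ and $g^2(x^0)$ are identical in both expressions and may be pulled outside the average.

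After factoring, the $\frac{1}{2}$ distributes over the two copies of $\ns f$ and the two copies of $\ns g$, recombining them through \eqref{eq:altcentred} into $\nc f(\X)$ and $\nc g(\X)$ respectively, while the two error terms collapse into $\ecfoverg(\X)=\frac{1}{2}(\esfoverg(\X)+\esfoverg(\xm))$, matching the claimed definition. This produces exactly the stated formula.

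There is no genuine obstacle in this argument: it is a routine linearity-and-averaging bookkeeping exercise, which is why the authors flag such results as straightforward. The single point requiring a moment's care is confirming that $x^0$ is common to $\X$ and $\xm$, so that the coefficient functions evaluate identically; once that is noted, the factoring and recombination proceed verbatim as in Proposition \ref{prop:productrule}.
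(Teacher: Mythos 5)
Your proof is correct and follows exactly the route the paper intends: the authors omit this proof as a straightforward instance of the averaging technique from Proposition \ref{prop:productrule}, which is precisely what you carry out by expanding via \eqref{eq:altcentred}, applying the GSG quotient rule over both $\X$ and $\xm$, and recombining. Your observation that the hypothesis on $g$ being nonzero at all points of $\X\cup\xm$ is what licenses both applications of the GSG quotient rule, and that the shared reference point $x^0$ lets the coefficients factor out, covers the only points requiring care.
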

\begin{cor}[GCSG power rule, negative exponent] \label{cor:gcsgpowerneg}
Let $\X=\langle x^0,x^0+d^1,\ldots,x^0+d^m\rangle$ be an ordered set of $m+1$ points such that $\X \cup \X^-$ is in $\dom(f)$   and for which $f(x^0)$, $f(x^0\pm d^1), \ldots, f(x^0\pm d^m)$ are all nonzero. Let $k\in\N$. Then
\begin{equation*}\nc f^{-k}(\X)=-k f^{-k-1}(x^0)\nc f(\X)-\ecfminusk (\X),\end{equation*}
where
\begin{equation*}\ecfminusk (\X)=\frac{1}{2}\left(\esfminusk(\X)+\esfminusk (\X^-)\right).\end{equation*}
\end{cor}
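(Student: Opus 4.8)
The plan is to mirror the averaging argument of Proposition \ref{prop:productrule}, feeding the GSG negative power rule from Table \ref{tab:calcrulesgsg} through the identity \eqref{eq:altcentred}. First I would invoke \eqref{eq:altcentred} to write $\nc f^{-k}(\X)=\frac{1}{2}\left(\ns f^{-k}(\X)+\ns f^{-k}(\xm)\right)$. The nonzero hypotheses on $f(x^0)$ and on $f(x^0\pm d^i)$ guarantee that $f^{-k}$ is defined at every point of $\X\cup\xm$, so both generalized simplex gradients on the right-hand side exist and the GSG power rule is applicable to each.

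Next I would expand each summand using the GSG negative power rule. For the set $\X$ this gives $\ns f^{-k}(\X)=-k[f(x^0)]^{-k-1}\ns f(\X)-\esfminusk(\X)$, and the same rule applied to $\xm$ gives $\ns f^{-k}(\xm)=-k[f(x^0)]^{-k-1}\ns f(\xm)-\esfminusk(\xm)$. The crucial observation is that $\X$ and $\xm$ share the common reference point $x^0$, so the scalar coefficient $-k[f(x^0)]^{-k-1}$ is identical in both expansions; this is precisely what will allow it to factor cleanly out of the average.

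Substituting and collecting terms, I would pull $-k[f(x^0)]^{-k-1}$ out of the two leading terms to obtain $-k[f(x^0)]^{-k-1}\cdot\frac{1}{2}\left(\ns f(\X)+\ns f(\xm)\right)$, and gather the two remainder terms into $\frac{1}{2}\left(\esfminusk(\X)+\esfminusk(\xm)\right)$. Recognizing the first bracket as $\nc f(\X)$ via \eqref{eq:altcentred}, and the second as $\ecfminusk(\X)$ by its definition, yields $\nc f^{-k}(\X)=-k[f(x^0)]^{-k-1}\nc f(\X)-\ecfminusk(\X)$, which is exactly the claimed identity since $[f(x^0)]^{-k-1}=f^{-k-1}(x^0)$.

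I do not anticipate a serious obstacle: the argument is purely the linearity of the GCSG combined with the fact that $\X$ and $\xm$ share a reference point. The only point requiring care is the bookkeeping step of confirming that the coefficient $[f(x^0)]^{-k-1}$ depends only on the common base point $x^0$ and not on the perturbation directions $\pm d^i$, which is what permits it to commute past the averaging. This is the same template that reproduces the preceding $k$-product, positive-power, and quotient rules, which is why these proofs are described as straightforward.
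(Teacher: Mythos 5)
Your proof is correct and is exactly the argument the paper intends: the paper omits the proof of this corollary as ``straightforward,'' pointing to the averaging technique of Proposition \ref{prop:productrule}, which is precisely what you carry out by expanding $\nc f^{-k}(\X)=\tfrac{1}{2}(\ns f^{-k}(\X)+\ns f^{-k}(\X^-))$ via the GSG negative power rule and factoring out the coefficient $-k[f(x^0)]^{-k-1}$ common to both sample sets. Nothing is missing.
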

\noindent Our final calculus rule for the GCSG is the chain rule. For this, we require some additional notation. Let $f:\dom(f) \subseteq \R^p\to\R$ and $g: \dom(g) \subseteq \R^n\to\R^p$, where
\begin{equation*}g(y)=\left[\begin{array}{c}g_1(y)\\g_2(y)\\\vdots\\g_p(y)\end{array}\right]\in\R^p.\end{equation*}
Let $\X=\langle x^0,x^0+d^1,\ldots,x^0+d^m\rangle$ be an ordered set of $m+1$ distinct points such that $\X \cup \X^-$ is in $\dom(g)$ and define
\begin{align*}
g(\X)&=\langle g(x^0),g(x^0+d^1),\ldots,g(x^0+d^m)\rangle\\
&=\langle g(x^0),g(x^0)+h^1,\ldots,g(x^0)+h^m\rangle,\\
g(\X)^{-}&= \langle g(x^0), g(x^0)-h^1, \ldots, g(x^0)-h^m \rangle
\end{align*}
where $h^i=g(x^0+d^i)-g(x^0)$ for all $i\in\{1,2,\ldots,m\}$, to be ordered sets of $m+1$ points such that $g(\X) \cup g(\X)^-$ is in $\dom(f)$. Denote
\begin{align*}
\sg=S(g(\X))&=[g(x^0+d^1)-g(x^0)~~\cdots~~g(x^0+d^m)-g(x^0)]\\
&=[h^1~~\cdots~~h^m]\in\R^{p\times m}.
\end{align*}
Now we introduce the generalized centred simplex Jacobian of $g$ over $\X$.
\begin{df}[Generalized centred simplex Jacobian] Define the function $g: \dom(g) \subseteq \R^n\to\R^p$, $y\mapsto[g_1(y)~~g_2(y)~~\cdots~~g_p(y)]^\top$. Let $\X$ be an ordered set of $m+1$ points such that $\X \cup \X^-$ is in $\dom(g)$. Then the \emph{generalized centred simplex Jacobian} of $g$ over $\X$, denoted by $\jcg(\X)$, is the $p\times n$ real matrix defined by
 \begin{equation*}
\jcg (\X)=\left[\begin{array}{c}\nc g_1(\X)^\top\\\nc g_2(\X)^\top\\\vdots\\\nc g_p(\X)^\top\end{array}\right].
\end{equation*}
\end{df}
\noindent With these terms defined, we are ready to present the GCSG chain rule. Note that
\begin{align*}
\dcf (g(\X))&=\frac{1}{2}\left[\begin{array}{c}f(g(x^0)+h^1)-f(g(x^0)-h^1)\\\vdots\\f(g(x^0)+h^m)-f(g(x^0)-h^m)\end{array}\right].\\
\neq \dcfcircg (\X)&=\frac{1}{2}\left[\begin{array}{c}f(g(x^0+d^1))-f(g(x^0-d^1))\\\vdots\\f(g(x^0+d^m))-f(g(x^0-d^m))\end{array}\right].
\end{align*}
For this reason, the chain rule for the GCSG cannot be obtain using a similar approach to the one for the GSG \cite[Theorem 15]{hare2020}.
\begin{prop}[GCSG chain rule]
Let the functions  $f:\dom(f) \subseteq \R^p\to\R$, $g:\dom(g) \subseteq \R^n\to\R^p$ and let $\X=\langle x^0,x^0+d^1,\ldots,x^0+d^m\rangle$ be an ordered set of $m+1$ points  such that $\X \cup \X^-$ is in $\dom(g)$ and $g(\X) \cup g(\X)^-$ is in $\dom(f)$. Then
\begin{equation*}
\nc (f\circ g)(\X)=\jcg (\X)^\top\nc f(g(\X))-E,
\end{equation*}
where
\begin{align*}
E&=(S^\top)^\dagger\dcg(\X)(\sg^\top)^\dagger\widetilde{E}+(S^\top)^\dagger\widehat{E}\dcf(g(\X))-(S^\top)^\dagger\widetilde{E}\widehat{E},\\
\widetilde{E}&=\dcf(g(\X))-\dcfcircg(\X),\\
\widehat{E}&=\dcg(\X)(\sg^\top)^\dagger-\Id.
\end{align*}
\end{prop}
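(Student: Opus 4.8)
The plan is to collapse both sides to the defining formula $\nc h(\X)=(S^\top)^\dagger\dc_h(\X)$ and then verify a purely algebraic identity among the difference vectors; write $S=S(\X)$ throughout. First I would expand the left-hand side directly from the definition of the GCSG applied to the composite $h=f\circ g$, which gives $\nc(f\circ g)(\X)=(S^\top)^\dagger\dcfcircg(\X)$. This is the target to be matched.

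Next I would rewrite the leading term $\jcg(\X)^\top\nc f(g(\X))$ so as to expose the factor $(S^\top)^\dagger$. Stacking the rows $\nc g_j(\X)^\top=\dc_{g_j}(\X)^\top((S^\top)^\dagger)^\top$ and invoking the lemma $(A^\top)^\dagger=(A^\dagger)^\top$ (so that $((S^\top)^\dagger)^\top=S^\dagger$), I obtain $\jcg(\X)=\dcg(\X)^\top S^\dagger$, hence $\jcg(\X)^\top=(S^\top)^\dagger\dcg(\X)$. Combining this with $\nc f(g(\X))=(\sg^\top)^\dagger\dcf(g(\X))$ yields
\[
\jcg(\X)^\top\nc f(g(\X))=(S^\top)^\dagger\dcg(\X)(\sg^\top)^\dagger\dcf(g(\X)).
\]

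At this point the claim reduces to verifying that $E$ equals $(S^\top)^\dagger\big[\dcg(\X)(\sg^\top)^\dagger\dcf(g(\X))-\dcfcircg(\X)\big]$. I would finish by substituting $\widehat{E}=\dcg(\X)(\sg^\top)^\dagger-\Id$ and $\widetilde{E}=\dcf(g(\X))-\dcfcircg(\X)$ into the three-term formula for $E$ and factoring $(S^\top)^\dagger$ out on the left. Abbreviating $P:=\dcg(\X)(\sg^\top)^\dagger$, $u:=\dcf(g(\X))$, and $v:=\dcfcircg(\X)$, the three bracketed terms become $P(u-v)$, $(P-\Id)u$, and $-(P-\Id)(u-v)$, which telescope to $Pu-v$; this is precisely the bracket above, so the two sides agree.

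The main obstacle is bookkeeping rather than any deep idea. One must keep the non-commuting matrix factors on their correct sides — in particular the factor $\widehat E=P-\Id$ multiplies $\widetilde E$ from the left — and, more importantly, respect the distinction flagged just before the statement between $\dcf(g(\X))$, which samples the displacements $\pm h^i$ about $g(x^0)$, and $\dcfcircg(\X)$, which samples $g(x^0\pm d^i)$. It is exactly the gap $\widetilde E$ between these two vectors that the correction term $E$ is designed to absorb, so the expansion must carry $u$ and $v$ separately throughout and never conflate them.
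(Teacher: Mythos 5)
Your proof is correct and follows essentially the same route as the paper's: both arguments reduce everything to the defining formula $(S^\top)^\dagger$ applied to a centred difference vector and verify the same algebraic identity, the paper by inserting $\Id=\dcg(\X)(\sg^\top)^\dagger-\widehat E$ and $\dcfcircg(\X)=\dcf(g(\X))-\widetilde E$ and then expanding, you by substituting the definitions into $E$ and telescoping back to $\dcg(\X)(\sg^\top)^\dagger\dcf(g(\X))-\dcfcircg(\X)$. Your explicit derivation of $\jcg(\X)^\top=(S^\top)^\dagger\dcg(\X)$ via $(A^\top)^\dagger=(A^\dagger)^\top$, and your placement of $\widehat E$ to the left of $\widetilde E$ in the final product (the only dimensionally consistent reading of the displayed formula for $E$), are both sound.
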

\begin{proof}
We have
\begin{align*}
\nc (f\circ g)(\X)&=(S^\top)^\dagger\dcfcircg(\X)\\
&=(S^\top)^\dagger\big(\dcg(\X)(\sg^\top)^\dagger-\widehat{E}\big)\dcfcircg(\X).
\end{align*}Thus,
\begin{equation*}
\nc(f\circ g)(\X)=(S^\top)^\dagger\big(\dcg(\X)(\sg^\top)^\dagger-\widehat{E}\big)\big(\dcf(g(\X))-\widetilde{E}\big).
\end{equation*}
Expanding the left-hand side, we obtain
\small\begin{align*}
\nc (f\circ g)(\X)&=(S^\top)^\dagger\left[\dcg(\X)(\sg^\top)^\dagger\dcf (g(\X))-\dcg(\X)(\sg^\top)^\dagger\widetilde{E}-\widehat{E}\dcf(g(\X))+\widetilde{E}\widehat{E}\right]\\
&=\jcg(\X)^\top\nc f(g(\X))-(S^\top)^\dagger\dcg(\X)(\sg^\top)^\dagger\widetilde{E}-(S^\top)^\dagger\widehat{E}\dcf(g(\X))+(S^\top)^\dagger\widetilde{E}\widehat{E}\\
&=\jcg(\X)^\top\nc f(g(\X))-E.\qedhere
\end{align*}\normalsize
\end{proof}

\section{The generalized centred simplex calculus gradient} \label{sec:centredsimplexcalculusgradient}

In the previous section, we developed calculus rules for the GCSG that involve basic calculus rules plus a term $E$.  In this section, we see that we can eliminate the $E$ terms in all the calculus rules to create new gradient approximation techniques.  The error bounds for these new techniques remain $O(\Delta^2)$. We name these techniques the \emph{generalized centred simplex calculus gradient (GCSCG)}.

Table \ref{tab1} below summarizes the calculus results of Section \ref{sec:calc}.
\begin{table}[H]
\caption{Calculus rules for the GCSG}\label{tab1}
\begin{tabular}{|l||l|l|l|}\hline
Rule&Formula&$E^c$&$E^s$\\\hline
\scriptsize\pbox{2cm}{Product\\$fg$}&\scriptsize $f(x^0)\nc g+g(x^0)\nc f+\ecfg$&\scriptsize $\frac{1}{2}\left(\esfg(\X)+\esfg(\X^-)\right)$&\scriptsize $(S^\top)^\dagger\delta^s_{f|g}$\\\hline
\scriptsize\pbox{2cm}{Product\\ $f_1\Compactcdots f_k$}&\scriptsize $\sum\limits_{i=1}^k\prod\limits_{j\neq i}f_j(x^0)\nc f_i+\ecfonefk$&\scriptsize $\frac{1}{2}\left(\esfonefk(\X)+\esfonefk(\X^-)\right)$&\scriptsize $(S^\top)^\dagger\left(\dsfonefk-\sum\limits_{i=1}^k\prod\limits_{j\neq i}f_j(x^0)\dsf \right)$\\\hline
\scriptsize Power&\scriptsize $kf^{k-1}(x^0)\nc f+\ecfk$&\scriptsize $\frac{1}{2}\left(\esfk(\X)+\esfk (\X^-)\right)$&\scriptsize$(S^\top)^\dagger\left(\sum\limits_{i-1}^{k-1}f^{k-1-i}(x^0)\delta_{f|f^{-i}}^s\right)$ \\\hline
\scriptsize Quotient&\scriptsize $\frac{g(x^0)\nc f-f(x^0)\nc g}{g^2(x^0)}-\ecfoverg$&\scriptsize $\frac{1}{2}\left(\esfoverg(\X)+\esfoverg (\X^-)\right)$&\scriptsize$\frac{(S^\top)^\dagger}{g(x^0)}\delta_{\frac{f}{g}|g}^s$ \\\hline
\scriptsize\pbox{2cm}{Negative\\power}&\scriptsize$-kf^{-k-1}(x^0)\nc f-\ecfminusk$&\scriptsize$\frac{1}{2}\left(\esfminusk(\X)+\esfminusk (\X^-)\right)$&\scriptsize$\frac{(S^\top)^\dagger}{f^k(x^0)}\left(k\delta_{\frac{1}{f}|f}^s-\sum\limits_{i=1}^{k-1}f^{1+i}(x^0)\delta_{f^{-1}|f^{-i}}^s\right)$\\\hline
\scriptsize Chain&\scriptsize $(\jcg)^\top\nc f(g(\X))-\ecfcircg$&\scriptsize\pbox{4cm}{$(S^\top)^\dagger\dcg (\X)(\sg^\top)^\dagger\widetilde{E}+$\\$(S^\top)^\dagger\widehat{E}\dcf (g(\X))-(S^\top)^\dagger\widetilde{E}\widehat{E}$} &\scriptsize$(S^\top)^\dagger\left(\sg^\top(\sg^\top)^\dagger-\Id\right)\dsf (g(\X))$ \\\hline
\end{tabular}
\end{table}
\noindent We introduce the notation $\ncc$ to represent the GCSCG. In the sequel, we formalize the formulae.
\begin{df}[GCSCG]Let $f:\dom(f) \subseteq \R^n\to\R, g: \dom(g)\subseteq \R^n \to \R$ and let $\X=\langle x^0,x^0+d^1,\ldots,x^0+d^m\rangle$ be an ordered set such that $\X \cup \X^-$ is in $\dom(f) \cap \dom(g)$.
The GCSCG of $fg$ over $\X$ is
\begin{equation}\label{eq:cc1}\ncc (fg)(\X)=f(x^0)\nc g(\X)+g(x^0)\nc f(\X).\end{equation}
Let $f_i:\R^n\to\R$ for all $i\in\{1,\ldots,k\}$, $k\geq 2,$ and let $\X$ be an ordered set  such that $\X \cup \X^-$ is in $\dom(f_i)$ for all $i$. The GCSCG of $f_1\cdots f_k$ over $\X$ is
\begin{equation}\label{eq:cc2}\ncc (f_1\cdots f_k)(\X)=\sum\limits_{i=1}^k\prod\limits_{j\neq i}f_j(x^0)\nc f_i(\X).\end{equation}
The GCSCG of $f^k$ over $\X$ is
\begin{equation}\label{eq:cc3}\ncc f^k(\X)=kf^{k-1}(x^0)\nc f(\X),\end{equation}where $f(x^0)$ is nonzero whenever $k-1<0$.\medskip\\
Let $g(x^0)\neq0$. The GCSCG of $\frac{f}{g}$ over $\X$ is
\begin{equation}\label{eq:cc4}\ncc \left(\frac{f}{g}\right)(\X)=\frac{g(x^0)\nc f(\X)-f(x^0)\nc  g(\X)}{g^2(x^0)}.\end{equation}
Let $a\in (0, \infty)$. The GCSCG of $a^f$ over $\X$ is
\begin{equation}\label{eq:cc6}\ncc a^{f(\X)}=a^{f(x^0)}\nc f(\X)\ln a.\end{equation}
Let $f(x^0)\neq0$ and $a\in (0, \infty)$. The GCSCG of $\log_af$ over $\X$ is
\begin{equation}\label{eq:cc7}\ncc \log_af(\X)=\frac{1}{f(x^0)\ln a}\nc f(\X).\end{equation}
Let $f:\dom(f) \subseteq \R^p\to\R$ and $g:\dom(g)\subseteq\R^n\to\R^p$. Let $\X$ be an ordered set such that $\X \cup \X^-$ is in $\dom(g)$ and $g(\X) \cup g(\X)^-$ is in $\dom(f).$ The GCSCG of $f\circ g$ over $\X$ is
\begin{equation}\label{eq:cc5}\ncc (f\circ g)(\X)=\jcg (\X)^\top\nc f(g(\X)).\end{equation}
\end{df}
We point out that the GCSCG is less restrictive than the GCSG. For instance, \eqref{eq:cc3} only requires $f(x^0)$ to be nonzero when $k-1<0$, which is not sufficient in Corollary \ref{cor:gcsgpowerneg}. The quotient rule presented in \eqref{eq:cc4} requires only $g(x^0)$ to be nonzero, whereas Proposition \ref{prop:gcsgquotient} requires all of $g(x^0), g(x^0\pm d^1), \dots, g(x^0\pm d^m)$ to be nonzero. Lastly, the GCSG $\nc\ln f(\X)$ requires $f$ to be positive for all points in $\X$ and $\X^-$, so that $\delta_{\ln f}^c(\X)$ is well-defined. However, only $f(x^0)$ must be nonzero in \eqref{eq:cc7} and $f$ is not restricted at any other point in $\X$ or $\X^-$.\par The preceding seven equations of approximate gradients are summarized in Table \ref{tab2} below for quick reference.
\begin{table}[H]\centering
\caption{Calculus rules for the GCSCG}\label{tab2}
\begin{tabular}{|l||l l|}\hline
Rule&Formula $\ncc$&\\\hline
Product $fg$&$f(x^0)\nc g(\X)+g(x^0)\nc f(\X)$&\eqref{eq:cc1}\\\hline
Product $f_1\Compactcdots f_k$&$\sum\limits_{i=1}^k\prod\limits_{j\neq i}f_j(x^0)\nc f_i(\X)$&\eqref{eq:cc2}\\\hline
Power&$kf^{k-1}(x^0)\nc f(\X)$&\eqref{eq:cc3}\\\hline
Quotient&$\frac{g(x^0)\nc f(\X)-f(x^0)\nc g(\X)}{g^2(x^0)}$&\eqref{eq:cc4}\\\hline
Exponential&$a^{f(x^0)}\nc f(\X)\ln a$&\eqref{eq:cc6}\\\hline
Logarithmic&$\frac{1}{f(x^0)\ln a}\nc f(\X)$&\eqref{eq:cc7}\\\hline
Chain&$\jcg (\X)^\top\nc f(g(\X))$&\eqref{eq:cc5}\\\hline
\end{tabular}
\end{table}
\noindent The next step is to show that GCSCG has controlled error.

\subsection{Error bounds for the GCSCG}

In this section, we demonstrate that the GCSCG is a valid approximation method, in the sense that we can define an error bound between the approximations and the true values of the gradients at $x^0$. Furthermore, we show that the error bounds are all $O(\Delta^2)$. We provide some examples along the way, to show the accuracy gain that can be made by using the GCSCG. The four propositions below follow from applying Theorems \ref{thm:det} and \ref{thm:under} to the appropriate results from Section \ref{sec:calc}. We provide the proof for Proposition \ref{prop:nccfgerrorbound} as a demonstration and omit proofs for the other three results. In the following propositions, we  use $\uu=\Span S \subseteq \R^n.$ Note that if $S$ has full row rank, then $\uu=\R^n$ and $f_\uu=f.$

\begin{prop}[$\ncc (fg)$ error bound] \label{prop:nccfgerrorbound}
 Let $f:\dom(f) \subseteq \R^n\to\R, g:\dom(g) \subseteq \R^n \to \R$ be $\mathcal{C}^{3}$ on $B(x^0, \overline{\Delta})$ and denote by $L_f$ and $L_g$ the Lipschitz constants  of $\nabla^2 f$ and $\nabla^2 g$.
Let $\X=\langle x^0, x^1, \ldots, x^m \rangle$ be an ordered set  with radius $\Delta<\overline{\Delta}$ such that $S$ has full rank. Let $\uu=\Span S$. Then
\begin{equation*}
\|\ncc (fg)(\X)-\nabla(fg)_{\uu}(x^0)\|\leq\frac{\sqrt m}{6}\big(L_g|f(x^0)|+L_f|g(x^0)|\big)\big\|(\widehat{S}^\top)^\dagger\big\|\Delta^2.
\end{equation*}
\end{prop}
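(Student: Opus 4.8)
The plan is to exploit the defining formula \eqref{eq:cc1}, namely $\ncc(fg)(\X)=f(x^0)\nc g(\X)+g(x^0)\nc f(\X)$, together with the classical product rule for the true $\U$-gradient, and then reduce everything to the single-function error bounds of Theorem~\ref{thm:det} and Theorem~\ref{thm:under}. The key observation is that the GCSCG product formula has exactly the same scalar coefficients $f(x^0)$ and $g(x^0)$ as the true product-rule gradient, so the error will factor term by term.

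First I would identify the true target $\nabla(fg)_\uu(x^0)$. Since $\Proj_\uu$ is linear (being an orthogonal, hence symmetric, projection) and $\nabla f_\uu=\Proj_\uu\nabla f$, the ordinary product rule gives
\begin{align*}
\nabla(fg)_\uu(x^0)&=\Proj_\uu\nabla(fg)(x^0)=\Proj_\uu\big(f(x^0)\nabla g(x^0)+g(x^0)\nabla f(x^0)\big)\\
&=f(x^0)\nabla g_\uu(x^0)+g(x^0)\nabla f_\uu(x^0).
\end{align*}
Subtracting this from \eqref{eq:cc1} and grouping, the approximation error splits cleanly as
\begin{align*}
\ncc(fg)(\X)-\nabla(fg)_\uu(x^0)=f(x^0)\big(\nc g(\X)-\nabla g_\uu(x^0)\big)+g(x^0)\big(\nc f(\X)-\nabla f_\uu(x^0)\big).
\end{align*}
I would then apply the triangle inequality and pull out the scalars $|f(x^0)|$ and $|g(x^0)|$, leaving two single-function GCSG errors. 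Each is controlled by the $O(\Delta^2)$ bounds already proved: when $S$ has full row rank we have $\uu=\R^n$, $f_\uu=f$, $g_\uu=g$, and Theorem~\ref{thm:det} applies; when $S$ has full column rank (the underdetermined case) Theorem~\ref{thm:under} applies verbatim. In either case the two bounds take the identical form $\tfrac{L_g\sqrt m}{6}\|(\widehat{S}^\top)^\dagger\|\Delta^2$ and $\tfrac{L_f\sqrt m}{6}\|(\widehat{S}^\top)^\dagger\|\Delta^2$; substituting and factoring out $\tfrac{\sqrt m}{6}\|(\widehat{S}^\top)^\dagger\|\Delta^2$ yields exactly the claimed coefficient $L_g|f(x^0)|+L_f|g(x^0)|$.

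The only point requiring care — and the one I would flag as the main obstacle — is the uniform handling of the two full-rank regimes: I must ensure the target gradient $\nabla(fg)_\uu(x^0)$ is interpreted consistently (as $\nabla(fg)(x^0)$ projected onto $\uu$) so that the product-rule identity above and the two earlier theorems plug in without a mismatch between $\nabla f$ and its projection $\nabla f_\uu$. Once the error is rewritten as the projected product-rule difference, the remainder is a routine triangle-inequality estimate.
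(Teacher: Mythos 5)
Your proposal is correct and follows essentially the same route as the paper: decompose $\nabla(fg)_\uu(x^0)$ via the product rule into $f(x^0)\nabla g_\uu(x^0)+g(x^0)\nabla f_\uu(x^0)$, apply the triangle inequality to split the error into two single-function GCSG errors, and invoke Theorem~\ref{thm:det} or Theorem~\ref{thm:under} as appropriate. The only cosmetic difference is that you justify the product-rule identity for $\nabla(fg)_\uu$ through linearity of $\Proj_\uu$, whereas the paper uses $f_\uu(x^i)=f(x^i)$ and works directly with the restricted functions; both are valid.
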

\begin{proof}
Note  that $f(x^i)=f_{\uu}(x^i)$ and $g(x^i)=g_{\uu}(x^i)$ for all $i \in \{0, 1, \ldots, m\}.$ We have
\begin{align*}
&\|\ncc(fg)(\X)-\nabla(fg)_{\uu}(x^0)\|\\
=&\|f(x^0)\nc g(\X)+g(x^0)\nc f(\X)-f_{\uu}(x^0)\nabla g_{\uu}(x^0)-g_{\uu}(x^0)\nabla f_{\uu}(x^0)\|\\
\leq&|f(x^0)|\|\nc g(\X)-\nabla g_{\uu}(x^0)\|+|g(x^0)|\|\nc f(\X)-\nabla f_\uu(x^0)\|\\
\leq&\frac{\sqrt m}{6}\left(L_g|f(x^0)|+L_f|g(x^0)|\right)\big\|(\widehat{S}^\top)^\dagger\big\|\Delta^2
\end{align*}
by Theorem \ref{thm:det} or Theorem \ref{thm:under} as appropriate.
\end{proof}
\begin{prop}[$\ncc (f_1\Compactcdots f_k)$ error bound]\label{prop:ccproduct}
Let $f_i:\dom(f_i) \subseteq \R^n\to\R$ be $\mathcal{C}^{3}$ on $B(x^0, \overline{\Delta})$ and denote by $L_i$ the Lipschitz constants  of $\nabla^2 f_i$ for each $i$.
Let $\X=\langle x^0, x^1, \ldots, x^m \rangle$ be an ordered set with radius $\Delta<\overline{\Delta}$ such that $S$ has full rank. Let $\uu= \Span S.$  Then
\begin{equation*}\|\ncc (f_1\Compactcdots f_k)(\X)-\nabla(f_1\Compactcdots f_k)_\uu(x^0)\|\leq\frac{\sqrt m}{6}\sum\limits_{i=1}^k\prod\limits_{j\neq i}|f_j(x^0)|L_i\big\|(\widehat{S}^\top)^\dagger\big\|\Delta^2.\end{equation*}
\end{prop}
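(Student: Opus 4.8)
The plan is to mirror the argument used for Proposition~\ref{prop:nccfgerrorbound}, now carried out over $k$ summands. First I would invoke the definition \eqref{eq:cc2}, which gives $\ncc (f_1\Compactcdots f_k)(\X)=\sum_{i=1}^k\prod_{j\neq i}f_j(x^0)\nc f_i(\X)$, so that the approximation is already expressed as a scalar-weighted sum of the individual GCSGs $\nc f_i(\X)$. The target quantity $\nabla(f_1\Compactcdots f_k)_\uu(x^0)$ must then be expanded into a matching sum so that the difference can be taken term by term.

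The key algebraic step is to expand the true projected gradient. By the classical product rule, $\nabla(f_1\Compactcdots f_k)(x^0)=\sum_{i=1}^k\prod_{j\neq i}f_j(x^0)\nabla f_i(x^0)$. Applying the linear projection $\Proj_\uu$ and using the relation $\nabla(h)_\uu(x^0)=\Proj_\uu\nabla h(x^0)$ recalled earlier, I obtain $\nabla(f_1\Compactcdots f_k)_\uu(x^0)=\sum_{i=1}^k\prod_{j\neq i}f_j(x^0)\nabla(f_i)_\uu(x^0)$. As in the two-function case, since every sample point of $\X$ lies in $\aff\X$, we have $f_j(x^i)=(f_j)_\uu(x^i)$ for all $i$ and $j$, so the weighting coefficients $\prod_{j\neq i}f_j(x^0)$ coincide on both sides and no discrepancy is introduced by restricting to $\uu$.

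Subtracting the two sums and applying the triangle inequality, I would bound
\[
\|\ncc (f_1\Compactcdots f_k)(\X)-\nabla(f_1\Compactcdots f_k)_\uu(x^0)\|\leq\sum_{i=1}^k\prod_{j\neq i}|f_j(x^0)|\,\|\nc f_i(\X)-\nabla(f_i)_\uu(x^0)\|.
\]
Each factor $\|\nc f_i(\X)-\nabla(f_i)_\uu(x^0)\|$ is then controlled by Theorem~\ref{thm:det} (the full-row-rank case, where $\uu=\R^n$ and $(f_i)_\uu=f_i$) or by Theorem~\ref{thm:under} (the full-column-rank case), each yielding the estimate $\frac{L_i\sqrt m}{6}\|(\widehat{S}^\top)^\dagger\|\Delta^2$. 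Substituting these bounds and collecting the common factor $\frac{\sqrt m}{6}\|(\widehat{S}^\top)^\dagger\|\Delta^2$ produces exactly the claimed inequality.

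I do not anticipate a genuine obstacle here, since the structure is identical to Proposition~\ref{prop:nccfgerrorbound} and merely requires tracking $k$ terms rather than two. The only point that demands care is the expansion of $\nabla(f_1\Compactcdots f_k)_\uu(x^0)$: one must correctly commute $\Proj_\uu$ through the product-rule sum and confirm that the resulting scalar coefficients match those generated by \eqref{eq:cc2}, so that the triangle-inequality split cleanly isolates a single GCSG error in each summand.
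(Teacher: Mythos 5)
Your proposal is correct and takes essentially the same approach the paper intends: the paper omits this proof, stating that it follows from applying Theorems \ref{thm:det} and \ref{thm:under} by the same technique demonstrated for Proposition \ref{prop:nccfgerrorbound}, which is precisely what you do---expand $\ncc (f_1\Compactcdots f_k)(\X)$ and $\nabla(f_1\Compactcdots f_k)_\uu(x^0)$ into matching $k$-term sums, split by the triangle inequality, and bound each $\|\nc f_i(\X)-\nabla (f_i)_\uu(x^0)\|$ by the appropriate error theorem. The only cosmetic difference is that you obtain the decomposition of $\nabla(f_1\Compactcdots f_k)_\uu(x^0)$ by commuting $\Proj_\uu$ through the classical product rule, whereas the paper's demonstration applies the product rule directly to the restricted functions $(f_i)_\uu$; the two are equivalent.
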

\noindent In some situations, the error bound above is zero. Corollary \ref{cor:ccproduct} below gives sufficient conditions for the GCSCG $\nabla_{cc}(f_1\Compactcdots f_k)$ to be perfectly accurate.
\begin{cor}\label{cor:ccproduct}Let the assumptions of Proposition \ref{prop:ccproduct} hold. If any of the following holds:
\begin{itemize}
\item[(a)]$f_i(x^0)=f_j(x^0)=0$ for some $i,j\in\{1,\ldots,k\}, i\neq j$;
\item[(b)]$f_i$ is a polynomial of order less than three for all $i\in\{1,\ldots,k\}$;
\item[(c)]$f_i$ is a polynomial of order less than three and $f_i(x^0)=0$ for some $i\in\{1,\ldots,k\}$,
\end{itemize}then
\begin{equation*}
\ncc (f_1\Compactcdots f_k)(\X)=\nabla(f_1\Compactcdots f_k)_\uu(x^0).
\end{equation*}
\end{cor}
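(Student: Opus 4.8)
The plan is to invoke Proposition \ref{prop:ccproduct} directly and show that its right-hand side,
$$\frac{\sqrt m}{6}\sum_{i=1}^k\prod_{j\neq i}|f_j(x^0)|\,L_i\,\big\|(\widehat{S}^\top)^\dagger\big\|\Delta^2,$$
vanishes under each of the three hypotheses. Since the quantity being bounded is the norm $\|\ncc (f_1\Compactcdots f_k)(\X)-\nabla(f_1\Compactcdots f_k)_\uu(x^0)\|$, which is nonnegative, forcing the upper bound to equal zero immediately yields the claimed equality. As the scalar factor $\frac{\sqrt m}{6}\big\|(\widehat{S}^\top)^\dagger\big\|\Delta^2$ is common to all three cases, the task reduces to proving that the sum $\sum_{i=1}^k\prod_{j\neq i}|f_j(x^0)|\,L_i$ is zero.

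For case (a), I would fix an arbitrary index $l\in\{1,\dots,k\}$ and inspect the summand $\prod_{j\neq l}|f_j(x^0)|\,L_l$. The product omits only the single index $l$, yet two distinct indices $i,j$ satisfy $f_i(x^0)=f_j(x^0)=0$; hence at least one of these vanishing factors survives in $\prod_{j\neq l}|f_j(x^0)|$. Thus every summand is zero and the sum vanishes.

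Cases (b) and (c) both rest on a single observation: if $f_i$ is a polynomial of degree at most two, then $\nabla^2 f_i$ is a constant matrix, so its Lipschitz constant is $L_i=0$. In case (b) this holds for every $i$, so each summand carries a factor $L_i=0$ and the sum is zero. In case (c), let $i$ denote the polynomial index with $f_i(x^0)=0$; the summand indexed by $l=i$ is killed by $L_i=0$, while every summand with $l\neq i$ contains the factor $|f_i(x^0)|=0$ inside its product. Hence all summands vanish.

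I do not anticipate any serious obstacle: once Proposition \ref{prop:ccproduct} is in hand, each case is a short verification. The only conceptual point worth isolating is that a polynomial of degree at most two has a constant Hessian and therefore Lipschitz constant zero, which is what drives cases (b) and (c); the rest are purely combinatorial observations about which factors force a product to vanish.
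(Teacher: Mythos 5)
Your proposal is correct and matches the paper's intended argument exactly: the paper omits the proof as immediate, but the preceding text makes clear that the corollary follows by checking that the error bound of Proposition \ref{prop:ccproduct} vanishes, which is precisely your case analysis (a vanishing factor $|f_j(x^0)|$ surviving in each product for (a), $L_i=0$ from a constant Hessian for (b), and a combination of both for (c)).
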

\noindent When $S$ has full  rank,  Corollary \ref{cor:ccproduct} tells us that if just one of the $k$ functions is linear or quadratic and is equal to zero at $x^0$, then $\ncc (f_1\Compactcdots f_k)(\X)$ is equal to $\nabla (f_1\Compactcdots f_k)_\uu(x^0),$ regardless of the nature of the other $k-1$ functions. The same result is obtained if just two of the $k$ functions are equal to zero at $x^0$, no matter the form of the other functions.

\begin{prop}[$\ncc f^k$ error bound]\label{prop:power}
Let $f:\dom(f) \subseteq \R^n\to\R$ be $\mathcal{C}^3$  on $B(x^0, \overline{\Delta})$ and denote by $L$ the  Lipschitz constant of $\nabla^2 f$.  Let $f(x^0)\neq0$ whenever $k-1<0$.
Let $\X=\langle x^0, x^1, \ldots, x^m \rangle$ be an ordered set with radius $\Delta<\overline{\Delta}$ such that $S$ has full rank. Let $\uu=\Span S$ and $k \in \R$. Then
\begin{equation*}
\|\ncc (f^k)(\X)-\nabla (f^k)_\uu(x^0)\|\leq\frac{L\sqrt m}{6}|k||f(x^0)|^{k-1}\big\|(\widehat{S}^\top)^\dagger\big\|\Delta^2.
\end{equation*}
\end{prop}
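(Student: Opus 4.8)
The plan is to follow the template of Proposition \ref{prop:nccfgerrorbound} verbatim, since the GCSCG power rule \eqref{eq:cc3} has the same ``basic calculus rule'' shape as the product rule: it is a fixed scalar multiple of a single GCSG. First I would record that every sample point lies in $\aff\X=\uu+x^0$, so that $f(x^i)=f_\uu(x^i)$ for all $i\in\{0,1,\dots,m\}$; in particular $f(x^0)=f_\uu(x^0)$ and $\nc f(\X)=\nc f_\uu(\X)$. This lets me read \eqref{eq:cc3} as $\ncc(f^k)(\X)=kf^{k-1}(x^0)\nc f(\X)$ with the scalar coefficient already expressed through the restricted function.

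Next I would identify the target vector on the right-hand side. Applying the ordinary chain rule to the genuine gradient gives $\nabla(f^k)(x^0)=kf^{k-1}(x^0)\nabla f(x^0)$, and since $\Proj_\uu$ is linear the scalar pulls out:
\begin{equation*}
\nabla(f^k)_\uu(x^0)=\Proj_\uu\nabla(f^k)(x^0)=kf^{k-1}(x^0)\Proj_\uu\nabla f(x^0)=kf^{k-1}(x^0)\nabla f_\uu(x^0).
\end{equation*}
Subtracting this from the GCSCG and using $f(x^0)=f_\uu(x^0)$, the common factor $kf^{k-1}(x^0)$ separates cleanly:
\begin{equation*}
\ncc(f^k)(\X)-\nabla(f^k)_\uu(x^0)=kf^{k-1}(x^0)\big(\nc f(\X)-\nabla f_\uu(x^0)\big).
\end{equation*}

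Taking norms yields $\|\ncc(f^k)(\X)-\nabla(f^k)_\uu(x^0)\|=|k|\,|f(x^0)|^{k-1}\,\|\nc f(\X)-\nabla f_\uu(x^0)\|$, and I would close the argument by invoking Theorem \ref{thm:det} when $S$ has full row rank (so $\uu=\R^n$ and $f_\uu=f$) or Theorem \ref{thm:under} when $S$ has full column rank, each of which bounds $\|\nc f(\X)-\nabla f_\uu(x^0)\|$ by $\tfrac{L\sqrt m}{6}\|(\widehat S^\top)^\dagger\|\Delta^2$. This reproduces the stated inequality. I do not anticipate a genuine obstacle: the only delicate point is the scalar identity $\nabla(f^k)_\uu(x^0)=kf^{k-1}(x^0)\nabla f_\uu(x^0)$, which merely combines the chain rule for the true gradient with linearity of the projection, together with the degenerate case $k-1<0$, where the hypothesis $f(x^0)\neq 0$ guarantees that $f^{k-1}(x^0)$ and the true gradient of $f^k$ at $x^0$ remain well-defined.
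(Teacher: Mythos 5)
Your proposal is correct and follows exactly the template the paper intends: the authors state that Proposition \ref{prop:power} "follows from applying Theorems \ref{thm:det} and \ref{thm:under} to the appropriate results from Section \ref{sec:calc}" and omit the proof, giving Proposition \ref{prop:nccfgerrorbound} as the model, which is precisely what you reproduce (factor out the scalar $kf^{k-1}(x^0)$ using linearity of $\Proj_{\uu}$ and the identity $f(x^0)=f_{\uu}(x^0)$, then invoke the GCSG error bound). The argument is in fact slightly cleaner than the product-rule case since only one GCSG term appears and no triangle inequality is needed.
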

\begin{prop}[$\ncc \left(\frac{f}{g}\right)$ error bound]\label{prop:quotient} 
Let $f:\dom(f) \subseteq \R^n\to\R, g:\dom(g)\subseteq \R^n \to \R$ be $\mathcal{C}^{3}$ on $B(x^0,\overline{\Delta})$  and denote by  $L_f, L_g$ the Lipschitz constants of $\nabla^2 f, \nabla^2 g$ respectively.
Let $\X=\langle x^0, x^1, \ldots, x^m \rangle$ be an ordered set with radius $\Delta<\overline{\Delta}$ such that $S$ has full rank. Let $\uu=\Span S$.  Assume $g(x^0)\neq0$. Then
\begin{equation*}
\left\|\ncc \left(\frac{f}{g}\right)(\X)-\nabla\left(\frac{f}{g}\right)_\uu(x^0)\right\|\leq\frac{\sqrt m}{6}\left(L_f\left|\frac{1}{g(x^0)}\right|+L_g\left|\frac{f(x^0)}{g^2(x^0)}\right|\right)\big\|(\widehat{S}^\top)^\dagger\big\|\Delta^2.
\end{equation*}
\end{prop}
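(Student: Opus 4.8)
The plan is to mirror the proof of Proposition \ref{prop:nccfgerrorbound}, replacing the product combination by the quotient combination and invoking the per-function error bounds of Theorem \ref{thm:det} and Theorem \ref{thm:under}. First I would record that, since every sample point lies in $\aff\X$, we have $f(x^i)=f_\uu(x^i)$ and $g(x^i)=g_\uu(x^i)$ for all $i\in\{0,1,\dots,m\}$, whence $\nc f(\X)=\nc f_\uu(\X)$ and $\nc g(\X)=\nc g_\uu(\X)$. This is exactly the reduction used at the start of the proof of Proposition \ref{prop:nccfgerrorbound}, and it lets the full-row-rank case ($\uu=\R^n$, $f_\uu=f$) and the underdetermined case be handled uniformly.

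Next I would identify the target quantity $\nabla(f/g)_\uu(x^0)$. Using the definition $\nabla(f/g)_\uu(x^0)=\Proj_\uu\nabla(f/g)(x^0)$ together with the classical quotient rule and the linearity of $\Proj_\uu$, I would obtain
\begin{equation*}
\nabla\left(\frac{f}{g}\right)_\uu(x^0)=\frac{g(x^0)\nabla f_\uu(x^0)-f(x^0)\nabla g_\uu(x^0)}{g^2(x^0)},
\end{equation*}
using that $f(x^0)$ and $g(x^0)$ are scalars and that $\Proj_\uu\nabla f(x^0)=\nabla f_\uu(x^0)$, $\Proj_\uu\nabla g(x^0)=\nabla g_\uu(x^0)$. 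With the definition \eqref{eq:cc4} of $\ncc(f/g)(\X)$ in hand, I would subtract the two expressions over the common denominator $g^2(x^0)$, apply the triangle inequality to split the difference into an $f$-part and a $g$-part, and factor out the scalar coefficients $|g(x^0)|/g^2(x^0)=1/|g(x^0)|$ and $|f(x^0)|/g^2(x^0)$:
\begin{align*}
\left\|\ncc\left(\frac{f}{g}\right)(\X)-\nabla\left(\frac{f}{g}\right)_\uu(x^0)\right\|
&\leq\frac{1}{|g(x^0)|}\,\|\nc f(\X)-\nabla f_\uu(x^0)\|\\
&\quad+\left|\frac{f(x^0)}{g^2(x^0)}\right|\,\|\nc g(\X)-\nabla g_\uu(x^0)\|.
\end{align*}

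Finally, I would bound each of the two norms on the right by Theorem \ref{thm:det} (when $S$ has full row rank, so $\uu=\R^n$) or Theorem \ref{thm:under} (when $\rank S=m<n$), which give $\|\nc f(\X)-\nabla f_\uu(x^0)\|\leq\frac{L_f\sqrt m}{6}\|(\widehat{S}^\top)^\dagger\|\Delta^2$ and the analogous bound for $g$ with $L_g$. Substituting these and collecting terms yields the stated bound. The only step requiring any care is the identification of $\nabla(f/g)_\uu(x^0)$, where one must check that projecting the true quotient-rule gradient equals the quotient-rule combination of the projected gradients; this is immediate from the linearity of $\Proj_\uu$ because the coefficients $g(x^0)$, $f(x^0)$ and $g^2(x^0)$ are constants independent of the spatial variable. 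Consequently no genuine obstacle arises, and the remainder is the same scalar-coefficient triangle-inequality estimate already carried out in Proposition \ref{prop:nccfgerrorbound}.
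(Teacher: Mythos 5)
Your proof is correct and is exactly the argument the paper intends: the authors omit the proof of Proposition \ref{prop:quotient}, stating only that it follows by the same technique as Proposition \ref{prop:nccfgerrorbound}, and your reduction to $f_\uu$, $g_\uu$, the quotient-rule identification of $\nabla(f/g)_\uu(x^0)$ via linearity of $\Proj_\uu$, the triangle-inequality split, and the appeal to Theorems \ref{thm:det} and \ref{thm:under} reproduce that technique faithfully and yield the stated constant.
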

The error bound involving the chain rule ($\ncc (f\circ g)$) requires new techniques, so we include the proof of Proposition \ref{prop:cccomp}.
\begin{prop}[error bound for $\ncc (f\circ g)$]\label{prop:cccomp}
Let $g:\dom(g) \subseteq \R^n \to \R^p$,  $f: \dom(f) \subseteq \R^p \to \R$  be $\mathcal{C}^{3}$ on  $B(g(x^0),\overline{\Delta}_g)$ and $B(x^0,\overline{\Delta})$ respectively  and denote by $L_{\nabla^2 f} , L_{\nabla^2 g}$ the Lipschitz constants of  $\nabla^2 f$ and $\nabla^2 g$. Denote by $L_{g_i}$ the Lipschitz constant of $g_i$ on  $B(x^0,\overline{\Delta})$ for each  $i \in \{1,2, \dots, p\}.$ Let $\X=\langle x^0, x^1, \ldots, x^m \rangle$ be an ordered set with radius $\Delta<\overline{\Delta}$ and let $g(\X)=\langle g(x^0), g(x^1), \ldots, g(x^m) \rangle$ be an ordered set with radius $\Delta_g<\overline{\Delta}_g$. Assume that $S$ and $\sg$ have full rank. Let $\uu=\Span S \subseteq \R^n$ and $\vv= \Span \sg \subseteq \R^p.$  Then
\footnotesize\begin{equation*}
\|\ncc (f\circ g)(\X)-\nabla(f\circ g)_\uu (x^0)\|\leq \frac{\sqrt{m}\;p}{6}\big(\sqrt{m} \; L_{g_*} \;L_{\nabla^2 f}\big\|(\widehat{S}_g^\top)^\dagger\big\|+ \Vert \nabla f(g(x^0)) \Vert  L_{\nabla^2 g_*} \big)\big\|(\widehat{S}^\top)^\dagger\big\|\Delta^2_{*},\\
\end{equation*}\normalsize
where
\begin{align*}
\Delta_{*}&= \max\left \{ \Delta, \Delta_g \right\},\\
L_{g_{*}}&= \max \{L_{ g_i}: i=1, \dots, p\},\\
 L_{ \nabla^2 g_{*}}&= \max \{L_{\nabla^2 g_i}: i=1, \dots, p\}.
 \end{align*}
\end{prop}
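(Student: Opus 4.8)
The plan is to compare the definition $\ncc(f\circ g)(\X)=\jcg(\X)^\top\nc f(g(\X))$ from \eqref{eq:cc5} against the true value, written in matrix form. By the ordinary chain rule, $\nabla(f\circ g)(x^0)=J_g(x^0)^\top\nabla f(g(x^0))$, and since $\nabla h_{\uu}=\Proj_{\uu}\nabla h$, the target quantity is $\nabla(f\circ g)_{\uu}(x^0)=\Proj_{\uu}J_g(x^0)^\top\nabla f(g(x^0))$. I would introduce the true $\uu$-Jacobian $\overline{J}$, the $p\times n$ matrix whose $i$-th row is $\nabla(g_i)_{\uu}(x^0)^\top=(\Proj_{\uu}\nabla g_i(x^0))^\top$, so that $\overline{J}^\top$ has columns $\Proj_{\uu}\nabla g_i(x^0)$ and $\nabla(f\circ g)_{\uu}(x^0)=\overline{J}^\top\nabla f(g(x^0))$. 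The quantity to bound is then $\jcg(\X)^\top\nc f(g(\X))-\overline{J}^\top\nabla f(g(x^0))$, a difference of two matrix--vector products.

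Next I would split this difference in product form. Adding and subtracting $\jcg(\X)^\top\nabla f(g(x^0))$ and applying the triangle inequality gives
\begin{align*}
\|\ncc(f\circ g)(\X)-\nabla(f\circ g)_{\uu}(x^0)\| &\le \|\jcg(\X)^\top\|\,\|\nc f(g(\X))-\nabla f(g(x^0))\|\\
&\quad+\|\jcg(\X)^\top-\overline{J}^\top\|\,\|\nabla f(g(x^0))\|.
\end{align*}
The first summand isolates the error of the outer gradient approximation $\nc f$ evaluated over the image set $g(\X)$; the second isolates the error of the inner Jacobian approximation, row by row.

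Then I would bound each factor. For the inner-gradient factor, apply Theorem \ref{thm:det} (or Theorem \ref{thm:under}) to $f$ over $g(\X)$, whose radius is $\Delta_g$ and whose direction matrix is $\sg$, giving $\|\nc f(g(\X))-\nabla f(g(x^0))\|\le\frac{L_{\nabla^2 f}\sqrt m}{6}\|(\widehat{S}_g^\top)^\dagger\|\Delta_g^2$. For the Jacobian-difference factor, apply Theorem \ref{thm:det}/\ref{thm:under} to each component $g_i$ to get $\|\nc g_i(\X)-\Proj_{\uu}\nabla g_i(x^0)\|\le\frac{L_{\nabla^2 g_i}\sqrt m}{6}\|(\widehat{S}^\top)^\dagger\|\Delta^2$, and sum the $p$ rows to produce the factor $p$. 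For the operator norm $\|\jcg(\X)^\top\|$, I would use $\|\dc_{g_i}(\X)\|\le\sqrt m\,L_{g_i}\Delta$ (each centred difference is at most $L_{g_i}\|d^j\|$) together with $\|(S^\top)^\dagger\|=\|(\widehat{S}^\top)^\dagger\|/\Delta$ to obtain $\|\nc g_i(\X)\|\le\sqrt m\,L_{g_i}\|(\widehat{S}^\top)^\dagger\|$, hence $\|\jcg(\X)^\top\|\le p\sqrt m\,L_{g_*}\|(\widehat{S}^\top)^\dagger\|$. Replacing $\Delta,\Delta_g$ by $\Delta_*=\max\{\Delta,\Delta_g\}$ and the individual constants by $L_{g_*},L_{\nabla^2 g_*}$, and multiplying the two products out, reproduces the claimed bound term for term.

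The main obstacle is the two-level structure: unlike every previous rule, $\nc f$ is sampled not on $\X$ but on the \emph{image} set $g(\X)$, whose geometry --- its radius $\Delta_g$, direction matrix $\sg$, and span $\vv=\Span\sg$ --- is controlled only implicitly through $g$ and $\X$. Two points need care. First, $\Delta_g$ must be tied back to $\Delta$, which is $O(\Delta)$ by Lipschitz continuity of the components $g_i$, justifying a single $\Delta_*$. Second, and most delicate, is the interaction between the inner projection $\Proj_{\vv}$ and the outer projection $\Proj_{\uu}$: when $\sg$ has full row rank one has $\vv=\R^p$ and $\nc f(g(\X))$ approximates $\nabla f(g(x^0))$ outright, so the telescoping closes cleanly; when $\sg$ is underdetermined, $\nc f(g(\X))$ only approximates $\Proj_{\vv}\nabla f(g(x^0))$, and the leftover term $\overline{J}^\top(\Id-\Proj_{\vv})\nabla f(g(x^0))$ must be accounted for. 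Verifying that this residual is compatible with the $O(\Delta^2)$ rate --- it vanishes when $\vv=\R^p$ --- is the crux of the argument and the step I would expect to demand the most attention.
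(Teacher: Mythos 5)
Your decomposition is structurally the paper's: the same add-and-subtract of a cross term, the same three factor bounds ($\|\jcg(\X)^\top\|\leq p\sqrt m\,L_{g_*}\|(\widehat S^\top)^\dagger\|$ via $\|\delta^c_{g_i}\|\leq\sqrt m\,L_{g_i}\Delta$, the row-by-row Jacobian error summed to produce the factor $p$, and the outer-gradient error over $g(\X)$), and the same final assembly into $\Delta_*^2$. The one substantive divergence is the choice of intermediate term, and it opens a genuine gap. The paper adds and subtracts $\jcg(\X)^\top\nabla f_{\vv}(g(x^0))$, where $\nabla f_{\vv}=\Proj_{\vv}\nabla f$, having first identified the target as $(J_{g_{\uu}}(x^0))^\top\nabla f_{\vv}(g(x^0))$; you add and subtract $\jcg(\X)^\top\nabla f(g(x^0))$ with the \emph{full} gradient. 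When $\sg$ has full row rank, $\vv=\R^p$ and the two choices coincide, so your argument closes. But the hypotheses also allow $\sg$ to have full column rank with $m<p$, and there Theorem \ref{thm:under} bounds $\|\nc f(g(\X))-\nabla f_{\vv}(g(x^0))\|$, not $\|\nc f(g(\X))-\nabla f(g(x^0))\|$ as you assert: since $\nc f(g(\X))=(\sg^\top)^\dagger\delta^c_f(g(\X))$ lies in $\ran\sg=\vv$, the difference against the full gradient retains the component $(\Id-\Proj_{\vv})\nabla f(g(x^0))$, which does not shrink with $\Delta_g$. So the bound you invoke for your first summand is false in that case.

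You do correctly locate the problem — the residual $\overline{J}^{\top}(\Id-\Proj_{\vv})\nabla f(g(x^0))$ — but you leave it unresolved, and resolving it along your route is not free: one would need to show $(\Id-\Proj_{\vv})J_g(x^0)d^i=O(\Delta^2)$ via a Taylor expansion of $g$, which drags in Lipschitz constants of $\nabla g_i$ that do not appear in the stated constant. The paper never meets this residual because it telescopes through the projected gradient $\nabla f_{\vv}(g(x^0))$ and distinguishes the two rank cases for $\sg$ explicitly, so that Theorems \ref{thm:det} and \ref{thm:under} apply verbatim to the first summand. To repair your write-up with minimal change, replace your intermediate term by $\jcg(\X)^\top\nabla f_{\vv}(g(x^0))$ and take the paper's identification of $\nabla(f\circ g)_{\uu}(x^0)$ as the starting point; everything else you wrote then goes through unchanged, with $\|\nabla f_{\vv}(g(x^0))\|\leq\|\nabla f(g(x^0))\|$ supplying the last step.
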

\begin{proof}
We have
\begin{align*}
\Vert  \ncc (f \circ g)(\X)-\nabla(f\circ g)_\uu(x^0)\Vert&=\Vert \cjt \nc f(g(\X)) - \jtw \nabla f_\vv \left ( g_\uu \left ( x^0\right ) \right )\Vert.
\end{align*}
Note that $g_\uu(x^0)=g(x^0+\Proj_\uu(x^0-x^0))=g(x^0).$ We obtain
\begin{align*}
&\Vert  \ncc (f \circ g)(\X)-\nabla(f\circ g)_\uu(x^0)\Vert\\
=&\Vert \cjt \nc f(g(\X)) - \jtw \nabla f_\vv \left ( g \left ( x^0\right ) \right )\Vert\\
=&\Vert \cjt \cgfy - \cjt \gradfvg\\
\quad&+\cjt \gradfvg - \jtw \nabla f_{\vv} \left ( g\left ( x^0\right ) \right )\Vert\\
\leq&\Vert \cjt \Vert \Vert \cgfy -\gradfvg \Vert + \Vert \gradfvg \Vert \Vert \left ( \cj- \jacw \right )^\top \Vert.
\end{align*}
Let us find a bound for $\Vert \cgfy-\gradfvg \Vert.$ If $\sg$ has full column rank, then
\begin{align*}
\Vert \cgfy-\gradfvg \Vert &\leq \frac{\sqrt{m}}{6} L_{\nabla^2 f}\big\|(\widehat{S}_g^\top)^\dagger\big\|\Delta^2_g
\end{align*}
by Theorem \ref{thm:under}. If $\sg$ has full row rank, then
\begin{align*}
    \gradfvg&=\Proj_\vv \nabla f (g(x^0))\\
    &=(\sg^\top)^\dagger \sg^\top \nabla f (g(x^0))\\
    &=\nabla f (g(x^0)).
\end{align*}
Hence
\begin{align*}
    \Vert \cgfy-\gradfvg \Vert &= \Vert \cgfy -\nabla f (g(x^0)) \Vert\\
    &\leq \frac{\sqrt{m}}{6} L_{\nabla^2 f}\big\|(\widehat{S}_g^\top)^\dagger\big\|\Delta^2_g
\end{align*}
by Theorem \ref{thm:det}. Next we find a bound for $\Vert \left (\cj-\jacw \right )^\top \Vert.$ We have
\begin{align*}
\Vert \left (\cj-\jacw \right )^\top \Vert &= \left \Vert \begin{bmatrix}\left (\nc g_1(\X)-\nabla (g_1)_{\uu}(x^0) \right )^\top\\ \vdots \\ \left ( \nc g_p(\X) -\nabla (g_p)_{\uu} (x^0) \right )^\top \end{bmatrix}^\top \right \Vert\\
&\leq \Vert \nc g_1(\X)-\nabla (g_1)_{\uu}(x^0) \Vert + \dots + \Vert \nc g_p(\X)-\nabla (g_p)_{\uu} (x^0) \Vert.\\
\end{align*}
If $S$ has full row rank, then $\nabla (g_i){_\uu}(x^0)=\nabla (g_i)(x^0)$ for all $i \in \{1, 2, \dots, p\}.$ By Theorem \ref{thm:det}, we obtain
\begin{align}
    &\Vert \nc g_1(\X)-\nabla (g_1)_{\uu}(x^0) \Vert + \dots + \Vert \nc g_p(\X)-\nabla (g_p)_{\uu} (x^0) \Vert \notag\\
    &\leq  \frac{\sqrt{m}}{6} \Delta\big\|(\widehat{S}^\top)^\dagger\big\|\left ( L_{\nabla^2 g_1} +\dots +L_{\nabla^2 g_p} \right ) \notag\\
    &\leq \frac{\sqrt{m}\;p}{6} L_{ \nabla^2 g_*}\big\|(\widehat{S}^\top)^\dagger\big\|\Delta^2. \label{eq:bound}
\end{align}
If $S$ has full column rank, then \eqref{eq:bound} is obtained by Theorem \ref{thm:under}. Finally, let us find a bound for  $\Vert \cjt \Vert$. We have
\begin{align*}
    \Vert \cjt \Vert&=\left \Vert \begin{bmatrix}\nc g_1(\X)^\top\\ \vdots \\ \nc g_p(\X)^\top \end{bmatrix}^\top \right \Vert\\
&\leq \Vert \nc g_1(\X) \Vert + \dots + \Vert \nc g_p(\X)\Vert\\
&\leq \big\|(\widehat{S}^\top)^\dagger\big\|\left \Vert \frac{\delta^c_{g_1}(\X)}{\Delta} \right \Vert+ \dots +  \big\|(\widehat{S}^\top)^\dagger\big\|\left \Vert \frac{\delta^c_{g_p}(\X)}{\Delta} \right \Vert\\
&\leq\big\|(\widehat{S}^\top)^\dagger\big\|\sqrt{m} \; L_{g_1}+ \dots +\big\|(\widehat{S}^\top)^\dagger\big\|\sqrt{m} \; L_{g_p}\\
&\leq \sqrt{m}\;p \; L_{g_*}\big\|(\widehat{S}^\top)^\dagger\big\|.
\end{align*}
All together,
\begin{align*}
&\Vert \nc (f \circ g)(\X)-\nabla(f\circ g)_\uu(x^0)\Vert\\
\leq&\sqrt{m}\;p \; L_{ g_*}\big\|(\widehat{S}^\top)^\dagger\big\|\frac{\sqrt{m}}{6} L_{\nabla^2 f}\big\|(\widehat{S}_g^\top)^\dagger\big\|\Delta_g^2+ \Vert \nabla f_{\vv}(g(x^0)) \Vert \frac{\sqrt{m}\;p}{6} L_{ \nabla^2 g_*} \big\|(\widehat{S}^\top)^\dagger\big\|\Delta^2\\
\leq&\frac{\sqrt{m}\;p}{6}\big(\sqrt{m} \; L_{g_*} \;L_{\nabla^2 f}\big\|(\widehat{S}_g^\top)^\dagger\big\|+ \Vert \nabla f_{\vv}(g(x^0)) \Vert  L_{\nabla^2 g_*} \big)\big\|(\widehat{S}^\top)^\dagger\big\|\Delta^2_{*}.
\end{align*}
Since $\Vert \nabla f_{\vv}(g(x^0)) \Vert \leq \Vert \nabla f (g(x^0)) \Vert,$ we obtain the final result. \qedhere
\end{proof}
\noindent Analysing the error bound of Proposition \ref{prop:cccomp}, we find two cases where it is zero. Corollary \ref{cor:cccomp} presents these cases.
\begin{cor}\label{cor:cccomp}
Let the assumptions of Proposition \ref{prop:cccomp} hold. If either of the following holds:
\begin{itemize}
\item[(a)]$g$ is a constant function;
\item[(b)]$f$ and $g$ are polynomials of order less than three,
\end{itemize}then
\begin{equation*}
\ncc (f\circ g)(\X)=\nabla(f\circ g)_\uu(x^0).
\end{equation*}
\end{cor}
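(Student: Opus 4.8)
The plan is to show that the error bound in Proposition~\ref{prop:cccomp} vanishes under either hypothesis, by examining the two additive terms appearing inside the parenthesis of that bound, namely the term carrying $L_{\nabla^2 f}$ and the term carrying $L_{\nabla^2 g_*}$. The key observation is that each of these terms is a product in which one factor is a Lipschitz constant of a Hessian, so it suffices to argue that the relevant Hessian Lipschitz constants (or the surrounding factors) are zero in each case. I would treat the two cases in turn, reducing each to a statement about the derivatives of polynomials or constants.

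First I would dispose of case~(a), where $g$ is constant. If $g$ is constant, then $h^i = g(x^0+d^i)-g(x^0)=0$ for all $i$, so $\sg$ is the zero matrix and, more to the point, $\nc g_i(\X)=0$ for every $i$; hence $\jcg(\X)=0$. On the exact side, $\nabla(f\circ g)_\uu(x^0)=\Proj_\uu J_g(x^0)^\top \nabla f(g(x^0))=0$ as well, since $J_g(x^0)=0$ for a constant $g$. Thus both the approximation $\ncc(f\circ g)(\X)=\jcg(\X)^\top \nc f(g(\X))$ and the true projected gradient are the zero vector, giving equality directly. Alternatively, one simply notes $L_{\nabla^2 g_*}=0$ and $L_{g_*}=0$ for constant $g$, so the entire right-hand side of the bound in Proposition~\ref{prop:cccomp} is zero, forcing the left-hand side to vanish.

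For case~(b), where both $f$ and $g$ are polynomials of degree at most two, the argument rests on the fact that a polynomial of degree $\le 2$ has a constant Hessian, so $\nabla^2 f$ and each $\nabla^2 g_i$ are constant maps and therefore $0$-Lipschitz: $L_{\nabla^2 f}=0$ and $L_{\nabla^2 g_*}=0$. Substituting these into the bound of Proposition~\ref{prop:cccomp} makes both summands in the parenthesis vanish, so the right-hand side is $0$ and hence $\ncc(f\circ g)(\X)=\nabla(f\circ g)_\uu(x^0)$. I would state explicitly that degree-$\le 2$ polynomials are $\mathcal{C}^3$ with $\nabla^3\equiv 0$, which is exactly the condition that makes the Hessian Lipschitz constant zero, so the hypotheses of Proposition~\ref{prop:cccomp} are met with $L_{\nabla^2 f}=L_{\nabla^2 g_*}=0$.

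The main obstacle I anticipate is bookkeeping in case~(b) rather than any genuine difficulty: one must confirm that all the Lipschitz constants that \emph{could} appear, across every branch (full row rank versus full column rank for both $S$ and $\sg$), reduce to the two Hessian-Lipschitz constants $L_{\nabla^2 f}$ and $L_{\nabla^2 g_*}$ that the final bound isolates, so that setting those two to zero genuinely kills the whole bound. In particular I would double-check that the factor $\Vert\nabla f(g(x^0))\Vert$ and the factors $L_{g_*}$, $\|(\widehat S^\top)^\dagger\|$, $\|(\widehat S_g^\top)^\dagger\|$ need not vanish and do not need to, since each is multiplied by one of the two zero Hessian-Lipschitz constants. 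Once that reduction is confirmed, both cases follow immediately by inspection of the inequality in Proposition~\ref{prop:cccomp}.
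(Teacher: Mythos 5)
Your proposal is correct and matches the paper's (implicit) argument: the paper offers no separate proof, only the observation that the error bound of Proposition \ref{prop:cccomp} vanishes in both cases, which is exactly your reduction via $L_{g_*}=L_{\nabla^2 g_*}=0$ in case (a) and $L_{\nabla^2 f}=L_{\nabla^2 g_*}=0$ in case (b). Your additional direct computation in case (a) (showing $\jcg(\X)=0$ and $J_g(x^0)=0$ so both sides are the zero vector) is a sensible safeguard, since a constant $g$ makes $\sg$ the zero matrix, so the full-rank hypothesis on $\sg$ and the factor $\big\|(\widehat{S}_g^\top)^\dagger\big\|$ degenerate and the bound-based route is cleanest to avoid there.
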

\begin{ex}
Consider $f:\R\to\R:y\mapsto y^2$, $g:\R\to\R:y\mapsto y^2+1$ and $\X=\langle2,3\rangle$. We compute the absolute error for $\ncc (f \circ g)(\X)$ and the value of $\nc (f \circ g)(\X).$ Note that $S$ and $\sg$ have full row rank. Hence, $\nabla (f \circ g)_\uu (x^0)=\nabla (f \circ g)(x^0).$ We have
\begin{align*}
\ncc (f\circ g)(\X)&=\jcg (\X)^\top\nc f(g(\X))\\
&=(S^\top)^\dagger\dcg (\X)(\sg^\top)^\dagger\dcf (g(\X))\\
&=1\cdot\frac{1}{2}(10-2)\frac{1}{5}\cdot\frac{1}{2}(100-0)\\
&=40
\end{align*}and the true derivative $\nabla (f \circ g)(x^0)$ is
\begin{equation*}
\frac{d}{dy}(y^2+1)^2\Big\vert_{y=2}=40.
\end{equation*}
Therefore, the absolute error $\| \ncc (f \circ g)(\X)-\nabla (f \circ g)(\X)\|=0.$ The error bound in Proposition \ref{prop:cccomp} is also equal to zero since $f$ and $g$ are quadratic functions. The GCSG $\nc(f\circ g)(\X)$ does not return the exact value of the derivative. Indeed, we have
\begin{align*}
\nc(f\circ g)(\X)&=(S^\top)^\dagger\dcfcircg(\X)\\
&=1\cdot\frac{1}{2}(9+1)^2-(1+1)^2)=48.
\end{align*}
\end{ex}

The next example demonstrates Corollary \ref{cor:cccomp}.

\begin{ex}
Consider $f:\R^3 \to \R:y \mapsto \alpha(y_1^2+y_2^2+y_3^2),$ $\alpha \in \R,$  $g:\R^3 \to \R^2:y \mapsto \bbm y_2-2y_1&y_1+y_2&y_1y_2+y_2\ebm^\top$ and the sample set $\X=\left \langle \bbm 1\\2\ebm, \bbm 2\\2 \ebm, \bbm 1\\3 \ebm\right \rangle.$ We compute the absolute error for $\ncc (f \circ g)(\X).$ Note that $g(\X)=\left \langle \bbm 0\\3\\4\ebm, \bbm -2\\4\\6\ebm, \bbm 1\\4\\6\ebm\right \rangle$ and
\begin{align*}
    S&=\bbm 1&0\\0&1 \ebm, \quad \sg=\bbm -2&1\\1&1\\2&2\ebm.
\end{align*}
We see that $S$ has full row rank and $\sg$ has full column rank, so $\nabla (f \circ g)_\uu(x^0)=\nabla (f \circ g)(x^0)$. We obtain
\begin{align*}
    \ncc (f \circ g)(\X)&=(\jcg(\X))^\top \nc f(g(\X))\\
    &=\bbm -2&1&2\\1&1&2\ebm\bbm 0\\ 4.4\alpha \\8.8\alpha \ebm\\
    &=\alpha\bbm 22\\22\ebm,
\end{align*}
and
\begin{align*}
    \nabla (f \circ g)(x^0)&=(\jac)^\top \nabla f(g(x^0))\\
    &=\bbm -2&1&2\\1&1&2\ebm \bbm 0\\ 6\alpha \\ 8\alpha \ebm\\
    &=\alpha \bbm 22\\ 22\ebm.
\end{align*}
Therefore, the absolute error is
\begin{align*}
\|\ncc (f\circ g)(\X)-\nabla(f\circ g)_\uu(x^0)\|&=\|\ncc (f\circ g)(\X)-\nabla(f\circ g)(x^0)\|=0.
\end{align*}
The error bound in Proposition \ref{prop:cccomp} is also equal to zero since $f$ and $g$ are quadratic functions.
\end{ex}
The next two propositions are proved using the same technique as Proposition \ref{prop:nccfgerrorbound}.
\begin{prop}[$\ncc a^f$ error bound]\label{prop:ccexp}
Let $f:\dom(f)\subseteq \R^n\to\R$ be $\mathcal{C}^{3}$ on $B(x^0,\overline{\Delta})$ and denote by $L$ the Lipschitz constant of $\nabla^2 f.$
Let $\X=\langle x^0, x^1, \ldots, x^m \rangle$ be an ordered set  with radius  $\Delta<\overline{\Delta}$ such that $S$ has full rank. Let $\uu=\Span S$ and $a>0$. Then
\begin{equation*}
\big\|\ncc a^{f(\X)}-\nabla a^{f_\uu(x^0)}\big\|\leq\big|a^{f(x^0)}\ln a\big|\frac{L\sqrt m}{6}\big\|(\widehat{S}^\top)^\dagger\big\|\Delta^2.
\end{equation*}
\end{prop}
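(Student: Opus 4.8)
The plan is to mirror the proof of Proposition \ref{prop:nccfgerrorbound}, reducing the exponential error bound to the basic GCSG bounds of Theorems \ref{thm:det} and \ref{thm:under} by factoring out a constant. The starting point is the GCSCG exponential formula \eqref{eq:cc6}, namely $\ncc a^{f(\X)}=a^{f(x^0)}\nc f(\X)\ln a$. On the analytic side, since $a^{f_\uu}=(a^f)_\uu$ is the restriction of $a^f$ to $\uu$, the ordinary chain rule on the subspace gives $\nabla a^{f_\uu}=a^{f_\uu}\ln a\,\nabla f_\uu$; evaluating at $x^0$ and using $f_\uu(x^0)=f(x^0)$ (the projection of $x^0-x^0$ is zero) yields $\nabla a^{f_\uu(x^0)}=a^{f(x^0)}\ln a\,\nabla f_\uu(x^0)$.

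With these two expressions in hand, I would subtract and factor out the common scalar $a^{f(x^0)}\ln a$:
\begin{align*}
\big\|\ncc a^{f(\X)}-\nabla a^{f_\uu(x^0)}\big\|
&=\big\|a^{f(x^0)}\ln a\,\nc f(\X)-a^{f(x^0)}\ln a\,\nabla f_\uu(x^0)\big\|\\
&=\big|a^{f(x^0)}\ln a\big|\,\big\|\nc f(\X)-\nabla f_\uu(x^0)\big\|.
\end{align*}
The remaining norm $\|\nc f(\X)-\nabla f_\uu(x^0)\|$ is exactly the quantity controlled by Theorem \ref{thm:det} (when $S$ has full row rank, so that $\uu=\R^n$ and $f_\uu=f$) or by Theorem \ref{thm:under} (when $S$ has full column rank). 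Substituting the bound $\frac{L\sqrt m}{6}\|(\widehat{S}^\top)^\dagger\|\Delta^2$ then gives the claimed inequality.

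Since this argument is essentially a one-line reduction to the already-established gradient bounds, I do not expect a genuine obstacle. The only point requiring care is the $\uu$-restriction bookkeeping: one must confirm that restricting and exponentiating commute, i.e.\ $(a^f)_\uu=a^{f_\uu}$, so that $\nabla a^{f_\uu(x^0)}$ really equals $a^{f(x^0)}\ln a\,\nabla f_\uu(x^0)$ and the scalar that factors out of the norm matches the one appearing in \eqref{eq:cc6}. Once that identity is in place, the full-rank dichotomy (full row rank versus full column rank) routes the estimate to the correct theorem, exactly as in Proposition \ref{prop:nccfgerrorbound}.
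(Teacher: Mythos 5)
Your proof is correct and follows exactly the route the paper intends: the authors omit the argument, stating only that Proposition \ref{prop:ccexp} ``is proved using the same technique as Proposition \ref{prop:nccfgerrorbound},'' and your factoring of the scalar $a^{f(x^0)}\ln a$ followed by an appeal to Theorem \ref{thm:det} or \ref{thm:under} is precisely that technique. Your extra care in checking that $(a^f)_\uu=a^{f_\uu}$ and $f_\uu(x^0)=f(x^0)$ is a sound (and welcome) piece of bookkeeping that the paper leaves implicit.
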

\noindent Examining the error bounds of Proposition \ref{prop:ccexp}, we see that it is zero whenever $L=0$. This is the case when $f$ is a polynomial of order less than three. The following examples illustrate this for both cases.
\begin{ex}
Consider $f:\R^2\to\R:y\mapsto y_1^2+y_2^2$ and $\X=\left\langle\left[\begin{array}{c}1\\1\end{array}\right],\left[\begin{array}{c}2\\1\end{array}\right],\left[\begin{array}{c}1\\2\end{array}\right]\right\rangle$. We compute the absolute error for $\ncc e^{f(\X)}$ and the value of $\nc e^f(\X).$ Note that $S$ has full row rank, hence, $\nabla e^{f_\uu(x^0)}=\nabla e^{f(x^0)}.$ We have
\begin{align*}
\ncc e^{f(\X)}&=e^{f(x^0)}\nc f(\X)\\
&=e^2(S(\X)^\top)^\dagger\delta_f^c(\X)\\
&=e^2\left[\begin{array}{c c}1&0\\0&1\end{array}\right]\frac{1}{2}\left[\begin{array}{c}5-1\\5-1\end{array}\right]\\
&=\left[\begin{array}{c}2e^2\\2e^2\end{array}\right]\approx\left[\begin{array}{c}14.78\\14.78\end{array}\right],
\end{align*}
and
\begin{align*}
\nabla e^{f(x^0)}&=\nabla e^{y_1^2+y_2^2}=[2e^2~~2e^2]^\top.
\end{align*}
So the absolute error is equal to zero. The error bound in Proposition \ref{prop:ccexp} is also equal to zero since $f$ is a quadratic function. Also,
\begin{align*}
\nc e^{f(\X)}&=\left(\widehat{S}^\top\right)^\dagger\delta_{e^f}^c(\X)\\
&=\left[\begin{array}{c c}1&0\\0&1\end{array}\right]\frac{1}{2}\left[\begin{array}{c}e^5-e^1\\e^5-e^1\end{array}\right]\approx\left[\begin{array}{c}72.85\\72.85\end{array}\right].
\end{align*}
\end{ex}
\begin{ex}
Consider $f:\R^2\to\R:y\mapsto y_1^2+y_2^2$ and $\X=\left\langle\left[\begin{array}{c}1\\1\end{array}\right],\left[\begin{array}{c}2\\1\end{array}\right] \right\rangle$. We compute the absolute error for $\ncc e^{f(\X)}.$ Note that $S$ has full column rank. We obtain
\begin{align*}
\ncc e^{f(\X)}&=e^{f(x^0)}\nc f(\X)\\
&=\begin{bmatrix}2e^2\\0\end{bmatrix}
\end{align*}
Also,
\begin{align*}
    \nabla e^{f_\uu (x_0)}&=e^{f_\uu(x^0)}\nabla f_\uu(\X)\\
    &=e^{f(x^0)} \Proj_\uu \nabla f(x_0)\\
    &=e^2 \bbm 1&0\ebm^\dagger \bbm1&0 \ebm \bbm 2\\2\ebm=\bbm 2e^2\\0\ebm,
\end{align*}
so the absolute error is
\begin{equation*}
\left\|\ncc e^{f(\X)}-\nabla e^{f_\uu(x^0)}\right\|=0.
\end{equation*}
The error bound in Proposition \ref{prop:ccexp} is also zero since $f$ is a quadratic function.
\end{ex}
\begin{prop}[$\ncc \log_af(\X)$ error bound]\label{prop:cclog}
 Let $f:\dom(f) \subseteq \R^n\to\R$ be $\mathcal{C}^{3}$ on $B(x^0, \overline{\Delta})$  with $f(x^0)\neq0$, and denote by $L$ the Lipschitz constant  of $\nabla^2 f$ on $B(x^0, \overline{\Delta}).$ Let $\X=\langle x^0, x^1, \ldots, x^m \rangle $ be an ordered set with radius $\Delta<\overline{\Delta}$ such that $S$ has full rank. Let $\uu=\Span S$ and $a>0$. Then
\begin{equation*}
\|\ncc \log_af(\X)-\nabla\log_af_\uu(x^0)\| \leq \left|\frac{1}{f(x^0)\ln a}\right|\frac{L\sqrt m}{6}\big\|(\widehat{S}^\top)^\dagger\big\|\Delta^2.
\end{equation*}
\end{prop}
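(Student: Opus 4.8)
The plan is to follow the same template as the proof of Proposition \ref{prop:nccfgerrorbound}: reduce the logarithmic error bound to the already-established GCSG error bound by pulling a fixed scalar out of the norm. The starting point is the defining formula \eqref{eq:cc7}, namely $\ncc \log_a f(\X) = \frac{1}{f(x^0)\ln a}\nc f(\X)$, which is well-defined because $f(x^0)\neq 0$ and $\ln a \neq 0$, so that $\frac{1}{f(x^0)\ln a}$ is a fixed nonzero real number not depending on the evaluation point.

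First I would identify the true quantity being approximated. Using the relationship $\nabla f_\uu = \Proj_\uu \nabla f$ recalled before Theorem \ref{thm:under} (from \cite[Lemma 4.2]{hare2019chain}), together with the ordinary chain rule $\nabla \log_a f = \frac{1}{f\ln a}\nabla f$, I would establish
\[
\nabla \log_a f_\uu(x^0) = \Proj_\uu \nabla \log_a f(x^0) = \frac{1}{f(x^0)\ln a}\,\Proj_\uu \nabla f(x^0) = \frac{1}{f(x^0)\ln a}\,\nabla f_\uu(x^0),
\]
where the scalar factors through the projection precisely because it is constant in the variable; in the full-row-rank case this reduces to $\uu = \R^n$ and $f_\uu = f$. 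Subtracting and invoking the displayed identity, the difference collapses to a scalar multiple of the GCSG error,
\[
\|\ncc \log_a f(\X) - \nabla \log_a f_\uu(x^0)\| = \left|\frac{1}{f(x^0)\ln a}\right|\,\|\nc f(\X) - \nabla f_\uu(x^0)\|.
\]
Applying Theorem \ref{thm:det} in the determined/overdetermined case ($S$ full row rank) or Theorem \ref{thm:under} in the underdetermined case ($S$ full column rank) bounds the remaining factor by $\frac{L\sqrt m}{6}\|(\widehat{S}^\top)^\dagger\|\Delta^2$, which immediately yields the claimed inequality.

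The only genuine subtlety — and the step I would treat most carefully — is the identification of the true gradient $\nabla \log_a f_\uu(x^0)$: confirming that restriction to $\uu$ and composition with $\log_a$ commute as claimed, and that the scalar really does come out of the projection cleanly. Everything else is routine, since once the difference is written as $\frac{1}{f(x^0)\ln a}$ times $\nc f(\X) - \nabla f_\uu(x^0)$, the two earlier theorems do all the analytic work and no fresh estimate on the Taylor remainder or the pseudoinverse is required.
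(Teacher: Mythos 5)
Your proposal is correct and matches the paper's approach: the paper states that this proposition is proved using the same technique as Proposition \ref{prop:nccfgerrorbound}, namely factoring the constant $\frac{1}{f(x^0)\ln a}$ out of the difference and applying Theorem \ref{thm:det} or Theorem \ref{thm:under} to $\|\nc f(\X)-\nabla f_\uu(x^0)\|$. Your careful verification that $(\log_a f)_\uu=\log_a(f_\uu)$, so that restriction to $\uu$ commutes with the outer composition and $f_\uu(x^0)=f(x^0)$, is exactly the right point to check and is handled correctly.
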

\noindent Examining the error bounds of Proposition \ref{prop:cclog}, we see that it is zero whenever $f$ is a polynomial of order less than three. The example below illustrates this situation.
\begin{ex}
Consider $f:\R^2\to\R:y\mapsto y_1^2+2y_2^2-3$ and $\X=\left\langle\left[\begin{array}{c}2\\2\end{array}\right],\left[\begin{array}{c}3\\2\end{array}\right],\left[\begin{array}{c}2\\3\end{array}\right]\right\rangle$. We compute the error bound for $\ncc \ln f(\X)$ and $\nc \ln f(\X).$  Note that $S$ has full row rank, so $\nabla \ln f_\uu (x^0)=\nabla \ln f(x^0).$ We have
\begin{align*}
\ncc \ln f(\X)&=\frac{1}{9}\left[\begin{array}{c c}1&0\\0&1\end{array}\right]\frac{1}{2}\left[\begin{array}{c}14-6\\19-3\end{array}\right]\\
&=\left[\begin{array}{c}\frac{4}{9}\\\frac{8}{9}\end{array}\right],
\end{align*}and the true gradient is
\begin{equation*}
\nabla\ln f(x^0)=\left[\begin{array}{c}\frac{1}{9}\cdot4\\\frac{1}{9}\cdot8\end{array}\right]=\left[\begin{array}{c}\frac{4}{9}\\\frac{8}{9}\end{array}\right].
\end{equation*}
Therefore, the absolute error is equal to zero. The error bound in Proposition \ref{prop:cclog} is also zero since $f$ is a quadratic function. The GCSG is
\begin{align*}
\nc \ln f(\X)&=\left[\begin{array}{c c}1&0\\0&1\end{array}\right]\frac{1}{2}\left[\begin{array}{c}\ln14-\ln6\\\ln19-\ln3\end{array}\right]\\
&\approx\left[\begin{array}{c}0.4236\\0.9229\end{array}\right].
\end{align*}
\end{ex}

\section{Conclusion} \label{sec:conclusion}
Generalized centred simplex gradients provide a formula to approximate gradients regardless of the number of points in the sample set. In the underdetermined case, an error bound with order $O(\Delta^2)$ is defined by restricting the function to a subspace of $\R^n.$ In the overdetermined case, the error bound remains order $O(\Delta^2)$. Thereafter, we showed that calculus rules for generalized centred simplex gradients can be  written in a way similar to those for the true gradients plus a term $E.$ Removing the term $E$ from the calculus rules leads to new approaches to approximate gradients that also have error bounds of order $O(\Delta^2)$. If the true objective functions are linear or quadratic, then the new approaches result in perfect accuracy.  Corollaries \ref{cor:ccproduct} and \ref{cor:cccomp} provide several cases where the product rule and the chain rule are perfectly accurate.

Recent work has been done in the reduction of calculation time and storage space needed to use simplex gradients.  In \cite{Coope2019Efficient}, Coope and Tappenden start with the knowledge that a simplex gradient in $\R^n$ can require $O(n^3)$ operations and $O(n^2)$ storage units, and then reduces both of them to $O(n)$ under reasonable conditions.  A valuable next step would be to confirm if these techniques also work for generalized centred simplex gradients. Another future research direction would be to investigate error bounds when the matrix $S$ does not have full rank (the undetermined case).

\def\cprime{$'$}

\end{document}